\newtheorem{thm}{Theorem}[section]
\newtheorem{lem}[thm]{Lemma}
\newtheorem{cor}[thm]{Corollary}
\newtheorem{conj}[thm]{Conjecture}
\newtheorem{rmk}[thm]{Remark}
\newtheorem{fac}[thm]{Fact}
\numberwithin{equation}{section}
\theoremstyle{definition}
\newcommand{\f}{\Bbb F}
\begin{document}

\title{Proof of a conjecture on monomial graphs}

\author[Xiang-dong Hou]{Xiang-dong Hou}
\address{Department of Mathematics and Statistics,
University of South Florida, Tampa, FL 33620}
\email{xhou@usf.edu}

\author{Stephen D. Lappano}
\address{Department of Mathematics and Statistics,
University of South Florida, Tampa, FL 33620}
\email{slappano@mail.usf.edu}

\author{Felix Lazebnik}
\address{Department of Mathematical Sciences, University of Delaware, Newark, DE 19716}
\email{fellaz@udel.edu}

\keywords{generalized quadrangle, girth eight, monomial graph, permutation polynomial, power sum}

\subjclass[2000]{05C35, 11T06, 11T55, 51E12}

\begin{abstract}
Let $e$ be a positive integer, $p$ be an odd prime, $q=p^{e}$, and
 $\f _q$ be the finite field of $q$ elements. Let $f,g \in \f _q [X,Y]$.
 The graph $G=G_q(f,g)$ is a bipartite graph with vertex partitions
$P=\f_q^3$ and $L=\f_q^3$, and edges defined as follows:
 a vertex $(p)=(p_1,p_2,p_3)\in P$ is adjacent to a vertex $[l] = [l_1,l_2,l_3]\in L$
 if and only if
$p_2 + l_2 = f(p_1,l_1)$  and $p_3 + l_3 = g(p_1,l_1)$.
Motivated by some questions in finite geometry and extremal graph theory,  
Dmytrenko, Lazebnik and Williford conjectured in 2007 that
if $f$ and $g$ are both monomials and  $G$ has no cycle of length
less than eight, then $G$ is isomorphic to the graph $G_q(XY,XY^2)$.  
They proved several instances of the conjecture by reducing it to the property of  
polynomials $A_k= X^k[(X+1)^k - X^k]$ and $B_k= [(X+1)^{2k} - 1] X^{q-1-k} - 2X^{q-1}$
 being permutation polynomials of $\f_q$.

In this paper we prove the conjecture by obtaining new results on the polynomials $A_k$ and $B_k$,
which are also of interest on their own.
\end{abstract}

\maketitle

%%%%%%%%%%%%%%%%%%%%%%%%%%%%%%%%%%%%%%%%%%%%%%%%%%%%%%%%%%%%%%%%%%%%%%%%%
%             section 1 
%%%%%%%%%%%%%%%%%%%%%%%%%%%%%%%%%%%%%%%%%%%%%%%%%%%%%%%%%%%%%%%%%%%%%%%%%
\section{Introduction}

All graphs considered in this paper are finite, undirected, with no loops or multiple edges.
 All definitions of graph-theoretic terms that we omit can be found  in Bollob\' as \cite{Bollobas98}. 
 The {\it order} of a graph is the number of its vertices.
 The {\it degree} of a vertex of a graph is the number of vertices adjacent to it.
 A graph is called $r$-{\it regular} if degrees of all its vertices are equal to $r$. 
A graph is called {\it connected} if every pair of its distinct
vertices is connected by a path. The {\it distance} between two
distinct vertices in a connected graph is the length of the shortest
path connecting them. The {\it diameter} of a connected graph is the
greatest of all distances between its vertices. The {\it girth} of a graph containing cycles is the length of a shortest cycle.

Though generalized quadrangles are traditionally viewed as incidence geometries (see Payne \cite{Payne90}, Payne and Thas \cite{Payne-Thas2009}, or  Van Maldeghem \cite{vM98}), they can be presented in purely graph theoretic terms,  as we choose to do in this paper.  A {\it generalized quadrangle of order $r$}, $r\ge 1$, denoted by GQ$(r)$,  is a bipartite $(r+1)$-regular graph of diameter four and girth eight.  For every prime power $r$, GQ($r$) exist; no example of GQ($r$) is known when $r$ is not a prime power.  Moreover, when $r=q$ is an odd prime power, up to isomorphism, only one GQ($q$) is known (it corresponds to two dual geometries).  We will denote it by $\Lambda_q$. It is easy to see that if a GQ($r$) exists,  then it has $2(r^3+r^2+r+1)$ vertices and $(r+1)(r^3+r^2+r+1)$ edges.  GQ's are extremal objects for several problems in extremal graph theory. One of them is finding a $k$-regular graph of girth eight and of minimum order, often called a $(k,8)$-cage; see  a survey by  Exoo and Jajcay \cite{ExoJaj13}.  Another is a problem of finding a graph of diameter four,  maximum degree $\Delta$  and of maximum order;  see a survey by Miller and \v{S}ir\'{a}\v{n} on Moore graphs \cite{MilSir13}.  Yet another is to determine the greatest number of edges in a graph of a given order and girth at least eight; see Bondy \cite{Bondy02}, F\"uredi and Simonovits \cite{FurSim13}, and Hoory \cite{Hor02}.

Let $q$ be a prime power, and let $\mathbb F_q$ be the field of $q$ elements. The notion of a permutation polynomial will be central in this paper: A {\em permutation polynomial} (PP) of $\f _q$  is a polynomial $h\in \f _q [X]$ such that the function defined by $a\mapsto h(a)$ is a bijection on $\f _q$. For more information on permutation polynomials, we refer the reader to a recent survey \cite{Hou-FFA-2015} by Hou and the references therein.

Let $f,g \in \f _q [X,Y]$.
 The graph $G=G_q(f,g)$ is a bipartite graph with vertex partitions
$P=\f_q^3$ and $L=\f_q^3$, and edges defined as follows:
 a vertex $(p)=(p_1,p_2,p_3)\in P$ is adjacent to a vertex $[l] = [l_1,l_2,l_3]\in L$
 if and only if
$$p_2 + l_2 = f(p_1,l_1)\;\;\;\text{ and}\;\;\;  p_3 + l_3 = g(p_1,l_1).$$

For the origins, properties and applications of graphs $G_q(f,g)$ and their generalizations, see 
Lazebnik and Ustimenko \cite{LU93}, Lazebnik and Woldar \cite{LWol01} and references therein.  For some later results, see Dmytrenko, Lazebnik and Wiliford \cite{DLW-FFA-2007} and \mbox{Kronenthal} \cite{Kronenthal-FFA-2012}.

%, Shao, He and Shan \cite{SHS08},  Cioab\u{a}, Lazebnik and Li \cite{CLL14},
%Cao, Lu, Wan, Wang and Wang \cite{CLWWW15}.

If $f$ and $g$  are monomials, we refer to  $G_q(f,g)$ as a {\it
monomial graph}. They all are  $q$-regular and of order $2q^3$. 
It is easy to check that the monomial graph $\Gamma_3(q) = G_q(XY,XY^2)$  has  girth eight. Most importantly, it is also known that if $q$ is odd,  $\Gamma_3(q)$  is isomorphic to an induced subgraph of the graph $\Lambda_q$;  see  \cite{LU93}, Payne \cite{P70}, and  \cite{vM98},  where $\Gamma_3(q)$ is presented with slightly different equations.  The presentation  of $\Gamma_3(q)$ as  $G_{q} (XY,XY^2)$ appears in Viglione \cite{Vig02}.  
The graph $\Gamma_3(q)$  can be obtained by deleting an edge from $\Lambda_q$ together with all vertices  at the distance at most two from the edge. It is known that graph $\Lambda_q$ is edge-transitive, and so the construction of $\Gamma_3(q)$ above does not depend on the edge of $\Lambda_q$.  We can also say that $\Lambda _q$ is obtained from $\Gamma_3(q)$ by ``attaching" to it a $(q+1)$-tree, i.e., a tree with $2q^3$ leaves whose every inner vertex is of degree $q+1$. For  details see Dmytrenko \cite{Dmy04}.

This suggests  to look for graphs $G_q(f,g)$   of girth eight not isomorphic to $\Gamma_3(q)$, where $q$ is odd. If they exist, one may try to attach a $(q+1)$-tree to them and construct a new GQ$(q)$. This idea of constructing a new GQ$(q)$ was suggested by Ustimenko in the 1990's.

 As monomial graphs are in `close vicinity' of the monomial graph $\Gamma_3(q)=G_q(XY, XY^2)$,  it is natural to begin looking for new  graphs $G_q(f,g)$ of girth eight among the monomial graphs.  This motivated papers  \cite{DLW-FFA-2007} and \cite{Kronenthal-FFA-2012}.  Another reason for looking at the monomial graphs first is the following:  For even $q$,  the monomial graphs do lead to a variety of non-isomorphic generalized quadrangles; see Cherowitzo \cite{Che99}, Glynn \cite{Glynn83} and \cite{Glynn89},
Payne \cite{Payne90}, and  \cite{vM98}. It is conjectured in
\cite{Glynn83} that known examples of such quadrangles represent  all possible ones.  The conjecture was checked by computer for all $e\le 28$ in \cite{Glynn89}, and for all $e\le 40$  by Chandler \cite{Chandler05}.

The results \cite{DLW-FFA-2007} and \cite{Kronenthal-FFA-2012} (see more on them in the next section) suggest that for odd $q$,  monomial graphs of girth at least eight are  isomorphic to $\Gamma_3(q)$. 
In fact, the main conjecture of \cite{DLW-FFA-2007} and \cite{Kronenthal-FFA-2012} is the following.

\begin{conj}\label{C1.1}
Let $q$ be an odd prime power. Then every monomial graph of girth eight is isomorphic to $\Gamma_3(q)$.
\end{conj}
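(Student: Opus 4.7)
The plan is to build on the reduction of Dmytrenko, Lazebnik and Williford in \cite{DLW-FFA-2007}: after normalizing a monomial graph $G_q(X^mY^n, X^sY^t)$ by its natural symmetries, the girth-eight condition is known to force the polynomials $A_k$ and/or $B_k$ to be permutation polynomials of $\f_q$ for certain exponents $k$ determined by the monomial exponents $(m,n,s,t)$. The graph $\Gamma_3(q)=G_q(XY,XY^2)$ corresponds to ``trivial'' exponents $k$ that are powers of $p$: note for instance $A_{p^i}(X)=X^{p^i}[(X+1)^{p^i}-X^{p^i}]=X^{p^i}$, which is obviously a PP. Thus the whole conjecture is reduced to proving that, for odd $q=p^e$, the only residues $k$ modulo $q-1$ for which $A_k$ or $B_k$ is a permutation polynomial of $\f_q$ are the trivial ones.

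The first step would be to apply the Hermite power-sum criterion to $A_k$: $A_k$ is a PP of $\f_q$ if and only if $\sum_{x\in\f_q} A_k(x)^s = 0$ for every $1\le s\le q-2$ and $|A_k^{-1}(0)|=1$. Expanding $A_k(X)^s = X^{ks}[(X+1)^k-X^k]^s$ by the binomial theorem yields a double sum of binomial coefficients $\binom{k}{i}$ weighted by power sums $\sum_{x\in\f_q} x^N$, only those with $(q-1)\mid N>0$ surviving. Lucas' theorem then rewrites the surviving binomials as products over the base-$p$ digits of $k$, turning the vanishing conditions into a family of combinatorial identities indexed by $s$. I would then show that these identities hold simultaneously only when the base-$p$ expansion of $k$ has a single non-zero digit equal to $1$, i.e., when $k$ is a power of $p$.

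I expect to handle $B_k$ in parallel. The factor $X^{q-1-k}$ suggests working modulo $X^q-X$ and performing the substitution $X\mapsto X^{-1}$, which converts $B_k$ into a polynomial whose shape mirrors $A_k$. The same power-sum and Lucas-theorem machinery then applies, and the compatibility between $A_k$ and $B_k$ inherited from the girth-eight reduction should allow the two classifications to be combined. I would also need to verify that, in the ``trivial'' cases, the resulting monomial graph is genuinely isomorphic to $\Gamma_3(q)$ and not some spurious second family.

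The central obstacle is the combinatorial step inside the $A_k$ analysis: simultaneously ruling out the vanishing of an exponentially large family of power sums when $k$ has several non-zero base-$p$ digits. A natural approach is induction on the number of non-zero digits of $k$, combined with a carefully chosen distinguished $s$ that, via Lucas' theorem, isolates a single surviving monomial contribution whose non-vanishing can be checked directly. It may also be necessary to separate the prime-field case $e=1$ (where explicit computation with $\binom{k}{j}\pmod{p}$ is tractable) from the case $e\ge 2$, and in the latter to exploit a Galois- or Frobenius-invariance argument that lifts the PP property from $\f_p$ to $\f_{p^e}$ only in the trivial exponents.
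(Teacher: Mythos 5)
There is a genuine gap, and it is the central one. Your plan reduces Conjecture~\ref{C1.1} to showing that ``the only residues $k$ modulo $q-1$ for which $A_k$ or $B_k$ is a permutation polynomial of $\f_q$ are the trivial ones'' --- that is, to proving Conjecture~A and Conjecture~B in full. Both of those statements remain open; the paper states explicitly that its results ``fall short of establishing the claims of Conjectures A and B'' and reiterates in the conclusion that both conjectures are still unresolved. The power-sum/Hermite/Lucas machinery you describe is indeed the machinery of Sections~3 and 4, and it does yield strong necessary conditions (e.g.\ $\binom{ka}{q-1-ka}\equiv 0$ and $\binom{2c}{c}\equiv 0 \pmod p$ when $A_k$ is a PP, and the identities \eqref{3.19}--\eqref{3.23} when $B_k$ is a PP), but no one has succeeded in pushing these to a full classification for either polynomial separately. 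Your proposed induction on the number of nonzero base-$p$ digits of $k$, and the ``Galois/Frobenius lifting'' from $\f_p$ to $\f_{p^e}$, run into exactly the obstruction the paper documents: the lifting step (Lemma~\ref{L4.2}) only works for extension degree $m\le p-1$, which is why Conjecture~A is established only under $\mathrm{gpf}(e)\le p-1$, and Conjecture~B is not even known for $q=p$.

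The reduction from \cite{DLW-FFA-2007} gives that \emph{both} $A_k$ \emph{and} $B_k$ are PPs of $\f_q$ for the same $k$, and that conjunction --- which you weaken to ``and/or'' --- is precisely what makes the proof possible. The paper's argument interleaves consequences of the two hypotheses: from $A_k$ being a PP one gets that all base-$p$ digits of the inverse exponent $k'$ are $0$ or $1$ (Lemma~\ref{L3.3.1}) and that $\binom{2c}{c}\equiv 0\pmod p$ with $c=\lfloor(q-1)/k'\rfloor$ (equation~\eqref{3.7}); from $B_k$ being a PP one gets that $c$ is even, which combined with the digit structure of $k'$ forces $c\equiv 0\pmod p$ (the argument of Theorem~\ref{T5.2} transported to $k'$ and $c$); and finally the $B_k$ power-sum identity at $s=(2cb)^*$, with the $A_k$-derived vanishing $\binom{2c}{c}=0$ used to kill one term, produces equation~\eqref{6.1}, whose right-hand side vanishes when $c\equiv 0\pmod p$ while its left-hand side $2^{-2ck'}$ cannot. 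Without this cross-feeding between the two permutation hypotheses, your outline has no mechanism to close the argument, because the separate classifications it relies on are not available.
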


In an attempt to prove Conjecture~\ref{C1.1}, two more related conjectures were proposed in \cite{DLW-FFA-2007} and \cite{Kronenthal-FFA-2012}.
For an integer $1\le k\le q-1$, let 
\begin{equation}\label{1.1}
A_k=X^k\bigl[(X+1)^k-X^k\bigr]\in \f_q[X]
\end{equation}
and 
\begin{equation}\label{1.2}
B_k=\bigl[(X+1)^{2k}-1\bigr]X^{q-1-k}-2X^{q-1}\in \f_q[X].
\end{equation}

\medskip

\noindent{\bf Conjecture A.} {\it Let $q$ be a power of an odd prime $p$ and $1\le k\le q-1$. Then $A_k$
is a PP of $\f_q$ if and only if $k$ is a power of $p$.
}

\medskip

\noindent{\bf Conjecture B.} {\it Let $q$ be a power of an odd prime $p$ and $1\le k\le q-1$. Then $B_k$
is a PP of $\f_q$ if and only if $k$ is a power of $p$.
}

\medskip

The logical relation between the above three conjectures is as follows. It was proved in \cite{DLW-FFA-2007} that for odd $q$, every monomial graph of girth $\ge 8$ is isomorphic to $G_q(XY,X^kY^{2k})$, where $1\le k\le q-1$ is an integer not divisible by $p$ for which both $A_k$ and $B_k$ are PPs of $\f_q$. In particular, either of Conjectures A and B implies Conjecture~\ref{C1.1}.

In \cite{DLW-FFA-2007} and \cite{Kronenthal-FFA-2012}, the above conjectures were shown to be true under various additional conditions. The main objective of the present paper is to confirm Conjecture~\ref{C1.1}. This is achieved by making progress on Conjectures A and B. Our results fall short of establishing the claims of Conjectures A and B. However, when considered together, these partial results on Conjectures A and B turn out to be sufficient for proving Conjecture~\ref{C1.1}.

The paper is organized as follows. In Section~2, we review the prior status of the three conjectures and highlight the contributions of the present paper. The permutation property of a polynomial $f\in\f_q[X]$ is encoded in the power sums $\sum_{x\in\f_q}f(x)^s$, $1\le s\le q-1$. In Section~3, we compute the power sums of $A_k$ and $B_k$, from which we derive necessary and sufficient conditions for $A_k$ and $B_k$ to be PPs of $\f_q$. Further results on $A_k$ and $B_k$ are collected in Section~4. The results gathered in Section~4 are a bit more than we need in this paper but can be useful for further work on Conjectures A and B. Sections~5 and 6 deal with Conjectures A and B, respectively. We show that each of them is true under a simple additional condition. Finally, the proof of Conjecture~\ref{C1.1} is given in Section~7. 

Two well known facts about binomial coefficients are frequently used in the paper without further mentioning. Lucas' theorem (see Lucas \cite{Lucas-AJM-1878}) states that for a prime $p$ and integers $0\le m_i, n_i\le p-1$, $0\le i\le e$,
\[
\binom{m_0+m_1p+\cdots+m_ep^e}{n_0+n_1p+\cdots+n_ep^e}\equiv \binom{m_0}{n_0}\cdots\binom{m_e}{n_e}\pmod p.
\]
Consequently, for integers $0\le n\le m$, $\binom mn\equiv 0\pmod p$ if and only if the sum $n+(m-n)$ has at least one carry in base $p$.

Throughout the paper, most equations involving integers should be treated as equations in the characteristic of $\f_q$, i.e., in characteristic $p$.

%%%%%%%%%%%%%%%%%%%%%%%%%%%%%%%%%%%%%%%%%%%%%%%%%%%%%%%%%%%%%%%%%%%%%%%%%
%             section 2 
%%%%%%%%%%%%%%%%%%%%%%%%%%%%%%%%%%%%%%%%%%%%%%%%%%%%%%%%%%%%%%%%%%%%%%%%%
\section{The Conjectures: Prior Status and New Contributions}

Let $q=p^e$, where $e$ is a positive integer. 
The ``if'' portions of Conjectures~A and B are rather obvious. It is also clear that if Conjectures~A (or B) is true for $k=k_0$, then it is also true for all $k$ in the $p$-cyclotomic coset of $k_0$ modulo $q-1$, i.e., for all $k\equiv p^ik_0\pmod{q-1}$, where $i\ge 0$.
It was proved in \cite{DLW-FFA-2007} that Conjecture~\ref{C1.1} is true for a given $q$ if the $k$'s for which both $A_k$ and $B_k$ are PPs of $\f_q$ are precisely the powers of $p$. In particular, either of Conjectures~A and B implies Conjecture~\ref{C1.1}.

%%%%%%%%%%%%%%%%%%%%%%%%%%%%%%%%%%%%%%%%%
\subsection{Prior Status of Conjecture~\ref{C1.1}}\

For an integer $e>1$, let $\text{gpf}(e)$ denote the greatest prime factor of $e$, and additionally, define $\text{gpf}(1)=1$.

\begin{thm}[{\cite[Theorem~3]{DLW-FFA-2007}}]\label{T1.1}   
Conjecture~\ref{C1.1} is true if one of the following occurs.
\begin{itemize}
  \item [(i)] $q=p^e$, where $p\ge 5$ and $\text{\rm gpf}(e)\le 3$. 
  \item [(ii)] $3\le q\le 10^{10}$.
\end{itemize}
\end{thm}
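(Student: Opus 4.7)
The plan is to prove Theorem~\ref{T1.1} by establishing either Conjecture~A or Conjecture~B under each of the hypotheses (i) and (ii). By the reduction already carried out in \cite{DLW-FFA-2007} (every girth-$\geq 8$ monomial graph on odd $q$ is isomorphic to $G_q(XY,X^kY^{2k})$ with $p\nmid k$ and both $A_k$ and $B_k$ being PPs of $\f_q$), this immediately yields Conjecture~\ref{C1.1} in each case, since $k$ being a power of $p$ modulo $q-1$ makes $G_q(XY,X^kY^{2k})$ isomorphic to $\Gamma_3(q)$.

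For part (ii), the strategy is direct computer verification. For each odd prime power $q=p^e$ with $3\le q\le 10^{10}$, iterate over one representative $k$ from each $p$-cyclotomic coset of $\{1,\dots,q-1\}$ coprime to $p$, and test whether $A_k$ is a PP of $\f_q$ (for instance, by tabulating $A_k(x)$ for $x\in\f_q$, or more efficiently by checking the degree of $A_k(X)^s\bmod(X^q-X)$ for a few values of $s$). The ``if'' direction of Conjecture~A is immediate, so a negative outcome for every non-$p$-power coset confirms the conjecture for that $q$. Grouping by cyclotomic cosets, exploiting Frobenius symmetry, and pruning via cheap necessary conditions brings the computation within reach.

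For part (i), with $q=p^e$, $p\ge 5$, and $e=2^a3^b$, the plan is to analyze Conjecture~A through its power-sum characterization: $A_k$ is a PP of $\f_q$ iff $\sum_{x\in\f_q}A_k(x)^s=0$ for $1\le s\le q-2$. Double binomial expansion gives
\[
A_k(X)^s=\sum_{j=0}^{s}\sum_{i=0}^{kj}(-1)^{s-j}\binom{s}{j}\binom{kj}{i}X^{k(2s-j)+i},
\]
and summing over $\f_q$ with $\sum_{x\in\f_q}x^n=-1$ when $(q-1)\mid n$ and $n>0$ (and $0$ otherwise) converts each power-sum equation into an identity on binomial coefficients modulo $p$. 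Via Lucas' theorem, these in turn become carry-free conditions on the base-$p$ expansion $k=k_0+k_1p+\cdots+k_{e-1}p^{e-1}$. Specializing $s$ to small values and to values of the form $s=p^t$ or $s=(q-1)/\ell$ for small divisors $\ell$ of $q-1$ is intended to extract enough constraints to force all but one of the digits $k_i$ to vanish.

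The main obstacle is the combinatorial rigidity of this last step: even with many power-sum conditions in hand, the constraints on $(k_0,\dots,k_{e-1})$ are a priori high-dimensional. The restriction $\mathrm{gpf}(e)\le 3$ is what makes the argument tractable, because writing $e=2e'$ or $e=3e'$ lets one exploit the Frobenius $p$-cyclic action on digits to pair up or triple up the constraints and reduce to $e'$ digits, opening the door to an induction on $(a,b)$. The hypothesis $p\ge 5$ ensures that key binomial coefficients such as $\binom{2k_i}{k_i}$ and $\binom{3k_i}{k_i}$ do not vanish prematurely modulo $p$, which is what makes the Frobenius reduction nontrivial. The delicate point is the boundary of the induction (when $a$ or $b$ drops to zero), which is where the argument is most likely to break and where the bulk of the technical work will be concentrated.
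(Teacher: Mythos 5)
This statement is quoted from \cite{DLW-FFA-2007} and the paper offers no proof of it at all --- it appears only as background under ``Prior Status'' --- so your attempt can only be judged on its own merits, and as written it is a plan with two genuine gaps rather than a proof. For part (ii), the computation you describe is not actually ``within reach'': tabulating $A_k(x)$ for all $x\in\f_q$ costs on the order of $q$ operations per $k$, and even a constant-time test per cyclotomic coset, summed over all odd prime powers $q\le 10^{10}$, is dominated by the roughly $4.5\times 10^8$ primes below $10^{10}$ and comes to well over $10^{18}$ operations. The verification only becomes feasible after the prime case is removed \emph{theoretically} (Conjecture~A holds for $q=p$; cf.\ Corollary~\ref{C3.3}) and after part (i) is used to discard $p\ge 5$ with $\text{gpf}(e)\le 3$, leaving a residual computation over $p=3$ with $2\le e\le 20$ and a handful of pairs $(p,e)$ with $p\le 10^2$, $e\ge 5$ and $\text{gpf}(e)\ge 5$ --- and even there one checks cheap necessary conditions in the spirit of \eqref{3.6} and \eqref{3.19} rather than full permutation tests. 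Your phrase ``pruning via cheap necessary conditions'' gestures at this but supplies neither the reduction nor the conditions, and without them the claim fails.

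For part (i) you correctly identify the general shape (power sums, Lucas' theorem, base-$p$ digits of $k$, a reduction along the factorization of $e$), but the step that actually proves anything --- forcing all but one digit to vanish --- is exactly the step you defer to ``the bulk of the technical work,'' and the mechanism you propose for it (pairing or tripling constraints via the Frobenius action, with $p\ge 5$ keeping $\binom{2k_i}{k_i}$ and $\binom{3k_i}{k_i}$ nonzero) is not the one that works. The argument that does work, visible in this paper's strengthening (Lemmas~\ref{L3.3.1} and \ref{L4.2}, Theorem~\ref{T4.1}), is a digit-sum count: if $A_k$ is a PP then every base-$p$ digit of $k'$ is $0$ or $1$, so writing $k'$ in base $q$ gives $k_0'+\cdots+k_{m-1}'\le m(q-1)/(p-1)$; combined with $k'\equiv 1\pmod{q-1}$ this forces the digit sum to equal $1$ whenever $m\le p-1$. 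That inequality $m\le p-1$ is the true role of the hypothesis $p\ge 5$: it permits the lifting steps $m=2,3,4$ and hence reaches every $e$ with $\text{gpf}(e)\le 3$ starting from the base case $q=p$. Your outline neither states nor proves a digit bound of this kind, so the induction you sketch has no engine driving it.
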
  

The above result was recently extended by Kronenthal \cite{Kronenthal-FFA-2012} as follows.

\begin{thm}[{\cite[Theorem~4]{Kronenthal-FFA-2012}}]\label{T1.2}
For each prime $r$ or $r=1$, there is a positive integer $p_0(r)$ such that Conjecture~\ref{C1.1} is true for $q=p^e$ with $\text{\rm gfp}(e)\le r$ and $p\ge p_0(r)$. In particular, one can choose $p_0(5)=7$, $p_0(7)=11$, $p_0(11)=13$.
\end{thm}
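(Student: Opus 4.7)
The plan is to extend the reduction used in the proof of Theorem~\ref{T1.1}. By \cite{DLW-FFA-2007}, Conjecture~\ref{C1.1} holds for a given $q=p^e$ provided the only $k\in\{1,\ldots,q-1\}$ for which both $A_k$ and $B_k$ are PPs of $\f_q$ are the powers of $p$. So it suffices to prove: if $\text{gpf}(e)\le r$ and $p\ge p_0(r)$, every such $k$ must be a power of $p$. I would write $k=\sum_{i=0}^{e-1}k_ip^i$ in base $p$ and, using the invariance of the PP property under $k\mapsto pk \pmod{q-1}$, normalize $k$ to the lexicographically least element of its $p$-cyclotomic coset; under this normalization, ``$k$ is a power of $p$'' becomes ``$k=1$''.

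The next step is to extract arithmetic constraints from the assumption that $A_k$ and $B_k$ are PPs. The standard tool is the vanishing of power sums $\sum_{x\in\f_q}A_k(x)^s$ and $\sum_{x\in\f_q}B_k(x)^s$ for $1\le s\le q-2$. Expanding $A_k$ and $B_k$ via the multinomial theorem and using the identity that $\sum_{x\in\f_q}x^t$ equals $-1$ when $(q-1)\mid t$ and is $0$ otherwise, each power sum collapses to a congruence modulo $p$ whose summands, by Lucas' theorem, are products of binomial coefficients of the digits $k_i$. The hypothesis $\text{gpf}(e)\le r$ then constrains the digit string $(k_0,\ldots,k_{e-1})$ sharply: any repetition period must divide $e$, hence be a product of primes at most $r$. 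This confines the analysis to a finite, $r$-bounded family of admissible period patterns.

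For each admissible period, the congruences become explicit integer identities in the relevant digits, with binomial coefficients as coefficients. Choosing $p$ larger than the absolute values of these coefficients guarantees that the identities remain non-trivial modulo $p$, and hence force $k=1$. This yields a finite threshold $p_0(r)$ depending only on $r$. The explicit values $p_0(5)=7$, $p_0(7)=11$, $p_0(11)=13$ then arise from checking the case analysis directly against the relatively small number of period patterns admitted for those particular values of $r$, with a suitable choice of power sums $s$ that render the resulting binomials jointly non-vanishing.

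The main obstacle is the combinatorial complexity as $r$ grows. Producing any finite $p_0(r)$ for a general prime $r$ requires a uniform bound on the binomial coefficients occurring across all admissible period patterns; the number of such patterns grows with the divisor structure of $e$, and ensuring joint non-vanishing modulo $p$ demands coordinating the choices of $s$ carefully so that the congruences coming from $A_k$ and those coming from $B_k$ are simultaneously informative. For small $r\in\{5,7,11\}$ direct enumeration suffices and gives the sharp thresholds above, but for arbitrary prime $r$ a cleaner estimate on the binomials is what one would have to supply.
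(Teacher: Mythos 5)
Your reduction to showing that the only $k$ for which both $A_k$ and $B_k$ are PPs of $\f_q$ are the powers of $p$ is correct, and the power-sum/Hermite machinery you invoke is indeed the source of the arithmetic constraints on $k$. But the step where the hypothesis $\text{gpf}(e)\le r$ enters is where the argument breaks down. You claim that this hypothesis confines the digit string $(k_0,\dots,k_{e-1})$ to ``a finite, $r$-bounded family of admissible period patterns.'' There is no reason for the base-$p$ digit string of $k$ to be periodic, and even with $\text{gpf}(e)\le r$ the length $e$ of that string is unbounded (e.g.\ $e=2^n$ has $\text{gpf}(e)=2$), so the set of candidate digit strings is not finite and cannot be enumerated. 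Consequently the later steps --- choosing $p$ larger than the coefficients of finitely many identities, and obtaining $p_0(5)=7$, $p_0(7)=11$, $p_0(11)=13$ by ``direct enumeration'' of period patterns --- have no foundation; you yourself concede that the case of general prime $r$ is not handled, whereas the theorem asserts the existence of $p_0(r)$ for every prime $r$.

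The missing idea is a subfield descent, i.e.\ an induction on the prime factorization of $e$; this is precisely how $\text{gpf}(e)\le r$ is used in the actual argument (compare Theorem~\ref{T1.4} and, for Conjecture~A, Lemma~\ref{L4.2}). One proves a lifting lemma: if the desired statement holds for $q$ and $m$ is at most a bound depending only on $p$ --- namely $(p-1)/\lfloor(p-1)/\alpha(p)\rfloor$ via the $B_k$ digit bound of Lemma~\ref{L3.7}, or $p-1$ via the $A_k$ digit bound of Lemma~\ref{L3.3.1} --- then it holds for $q^m$. The proof of the lifting step uses that a PP of $\f_{q^m}$ with coefficients in $\f_p$ restricts to a PP of $\f_q$, so by the inductive hypothesis one may normalize $k\equiv 1\pmod{q-1}$; writing $k$ in base $q$, the digit sum is $\equiv 1\pmod{q-1}$ yet, by the base-$p$ digit bound, at most $q-1$, hence equal to $1$, which forces $k$ to be a power of $p$. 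Iterating through a tower $\f_p\subset\f_{p^{e_1}}\subset\cdots\subset\f_{p^e}$ whose steps are primes $\le r$, with the base case $q=p$ supplied by Conjecture~A for prime fields, yields the theorem. The explicit thresholds then come from computing $\alpha(p)$ for the smallest primes $p$ with lifting bound $\ge r$ (e.g.\ $\alpha(7)=6\ge 5$ but $\alpha(5)=4<5$, whence $p_0(5)=7$), not from enumerating digit patterns.
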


\begin{rmk}\label{R1.3}\rm
\cite[Theorem~3]{DLW-FFA-2007} and the proof of \cite[Theorem~1]{DLW-FFA-2007} allow one to choose $p_0(3)=5$ and $p_0(1)=3$. However, in general, the function $p_0(r)$ given in \cite{Kronenthal-FFA-2012} is not explicit.
\end{rmk}

%%%%%%%%%%%%%%%%%%%%%%%%%%%%%%%%

\subsection{Prior Status of Conjecture~A}\

The proof of \cite[Theorem~1]{DLW-FFA-2007} implies that Conjecture~A is true for $q=p$.

%%%%%%%%%%%%%%%%%%%%%%%%%%%%%%%%

\subsection{Prior Status of Conjecture~B}\

For each odd prime $p$, let $\alpha(p)$ be the smallest positive even integer $a$ such that 
\[
\binom a{a/2}\equiv (-1)^{a/2}2^a\pmod p.
\]
The proof of \cite[Theorem~4]{Kronenthal-FFA-2012} implies the following.

\begin{thm}\label{T1.4}
Let $p$ be an odd prime. If Conjecture~B is true for $q=p^e$, then it is also true for $q=p^{em}$ whenever 
\[
m\le\frac{p-1}{\lfloor(p-1)/\alpha(p)\rfloor}.
\]
\end{thm}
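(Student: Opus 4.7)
The plan is to extract the lifting argument implicit in Kronenthal's proof of \cite[Theorem~4]{Kronenthal-FFA-2012} and recast it in the form of Theorem~\ref{T1.4}. Fix $q_0=p^{em}$, suppose $B_k$ is a PP of $\f_{q_0}$ for some $1\le k\le q_0-1$, and aim to force $k$ to lie in the $p$-cyclotomic coset modulo $q_0-1$ of some $k'\in\{1,\dots,p^e-1\}$; once this is achieved, the hypothesis that Conjecture~B is true for $q=p^e$ will deliver $k'$ (and hence $k$) as a power of $p$.

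The first step is the standard power-sum criterion: $B_k$ is a PP of $\f_{q_0}$ iff $\sum_{x\in\f_{q_0}}B_k(x)^s=0$ for every $1\le s\le q_0-2$. I would expand $B_k(x)^s$ via the binomial theorem and invoke the relation $\sum_{x\in\f_{q_0}}x^j=-1$ when $(q_0-1)\mid j>0$ and $0$ otherwise, converting each vanishing condition into a congruence among binomial coefficients modulo $p$. The explicit power-sum formulas for $B_k$ derived in Section~3 of the present paper make this translation effective, and one reads off, for each choice of $s$, a condition on the base-$p$ digits of $k$.

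Second, and following Kronenthal's choice, I would specialize to small even values of $s$ — concretely, $s=a/2$ with $2\le a<\alpha(p)$. For these $s$ the congruence extracted from the power sum takes precisely the shape $\binom{a}{a/2}\equiv(-1)^{a/2}2^a\pmod p$, weighted by combinatorial factors depending on the base-$p$ expansion of $k$. By the very definition of $\alpha(p)$ this ``dominant'' congruence fails for all such $a$, so the vanishing of the power sum can occur only through interior cancellations. A Lucas-theorem / pigeonhole analysis on the digits of $k$ then shows that the number of base-$p$ blocks of $k$ lying above the $p^e$ level that can contribute nontrivially is bounded by $\lfloor(p-1)/\alpha(p)\rfloor$. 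When $m$ satisfies the hypothesis $m\le(p-1)/\lfloor(p-1)/\alpha(p)\rfloor$, this count is too small to support $k$ outside a single length-$e$ block of digits, so $k$ is $p$-cyclotomically equivalent to some $k'\in\{1,\dots,p^e-1\}$. Since the PP property is preserved under multiplication of the exponent by $p$ modulo $q_0-1$ and descends to the subfield $\f_{p^e}$ for such representatives, $B_{k'}$ is a PP of $\f_{p^e}$; the assumed truth of Conjecture~B for $q=p^e$ completes the proof.

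The main obstacle is the combinatorial bookkeeping in the middle step: producing, from the Hermite-style vanishing conditions, the exact interaction between the failure of $\binom{a}{a/2}\equiv(-1)^{a/2}2^a\pmod p$ for $a<\alpha(p)$ and the base-$p$ digit pattern of $k$, and verifying that the resulting pigeonhole count gives the sharp threshold $\lfloor(p-1)/\alpha(p)\rfloor$. Every other step is either standard (power-sum criterion, cyclotomic-coset invariance of the PP property) or immediate from the hypotheses, so the essential work is to make this digit analysis airtight. The explicit $B_k$ power-sum formulas to be proved in Section~3 are the crucial input that makes this analysis tractable.
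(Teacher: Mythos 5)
Your two main ingredients --- the $\alpha(p)$-based digit bound extracted from the power sums (in the paper this is the chain $\binom{a}{a/2}\equiv(-1)^{a/2}2^a\pmod p$ for $a=\lfloor(q_0-1)/k\rfloor$, hence $a\ge\alpha(p)$, hence Lemma~\ref{L3.7}: every base-$p$ digit of $k$ is at most $\lfloor(p-1)/\alpha(p)\rfloor$) and the descent to the subfield $\f_{p^e}$ --- are the same ones the paper uses, but you assemble them in an order that cannot work. Your pivotal claim is that the digit/pigeonhole analysis \emph{alone} forces $k$ to be $p$-cyclotomically equivalent modulo $p^{em}-1$ to some $k'\in\{1,\dots,p^e-1\}$, and only afterwards do you invoke the hypothesis that Conjecture~B holds for $p^e$. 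That intermediate claim is false. Since $\alpha(p)\le p-1$ always, the bound $\lfloor(p-1)/\alpha(p)\rfloor\ge1$ permits, for instance, $k=1+p^e$: its two nonzero base-$p$ digits sit exactly $e$ apart, so for $m\ge2$ no cyclic shift of its length-$em$ digit string is supported on a window of length $e$, and no bookkeeping of digit sizes or counts can concentrate the support of $k$ in a single length-$e$ block. More structurally, a step that never uses the hypothesis would, for $e=1$, be proving Conjecture~B unconditionally for a range of $q$, which is open; so the hypothesis must enter \emph{before} you can localize $k$.

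The repair is to reverse the order, which is exactly the paper's proof. First restrict $B_k$ to $\f_{p^e}\subset\f_{p^{em}}$: injectivity there shows that $B_{k\bmod(p^e-1)}$ is a PP of $\f_{p^e}$, so the hypothesis together with Fact~\ref{F3.1} lets you normalize to $k\equiv1\pmod{p^e-1}$. Writing $k=k_0q^0+\cdots+k_{m-1}q^{m-1}$ in base $q=p^e$, this reads $k_0+\cdots+k_{m-1}\equiv1\pmod{q-1}$. Only now apply the digit bound: each $k_i\le\lfloor(p-1)/\alpha(p)\rfloor\,(q-1)/(p-1)$, so $k_0+\cdots+k_{m-1}\le m\lfloor(p-1)/\alpha(p)\rfloor(q-1)/(p-1)\le q-1$ under your hypothesis on $m$, and the congruence then forces the digit sum to equal $1$ exactly, i.e.\ $k$ is a power of $p$. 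With this reordering your outline collapses to the paper's short argument; as written, the middle step is a genuine gap, and it is precisely the step you flagged as ``the main obstacle.''
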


Unfortunately, unlike Conjecture~A, Conjecture~B has not been established for $q=p$. 

%%%%%%%%%%%%%%%%%%%%%%%%%%%%%%%

\subsection{Contributions of the Present Paper}\

We will prove the following results.
\begin{itemize}
  \item Conjecture~A is true for $q=p^e$, where $p$ is an odd prime and $\text{gpf}(e)\le p-1$ (Theorem~\ref{T4.1}). This implies that in Theorem~\ref{T1.2}, one can take $p_0(r)=r+1$ (Remark~\ref{R4.3}).
  \item Conjecture~B is true for $q=p^e$, where $e>0$ is arbitrary and $p$ is an odd prime satisfying $\alpha(p)>(p-1)/2$ (Theorem~\ref{T5.2}).
  \item Conjecture~\ref{C1.1} is true (Theorem~\ref{T6.2}).
\end{itemize} 

\begin{rmk}\label{R1.5}\rm 
Although Conjectures~A and B were originally stated for an odd characteristic, their status also appears to be unsettled for $p=2$.
\end{rmk}

%%%%%%%%%%%%%%%%%%%%%%%%%%%%%%%%%%%%%%
%  sections 2
%%%%%%%%%%%%%%%%%%%%%%%%%%%%%%%%%%%%%%

\section{Power Sums of $A_k$ and $B_k$}

Hermite's criterion (see Lidl and Niederreiter \cite[Theorem~7.4]{LN}) states that a polynomial $f\in\f_q[X]$ is a PP of $\f_q$ if and only if 
\begin{itemize}
  \item [(i)] $0$ is the only root of $f$ in $\f_q$ , and
  \item [(ii)] $\sum_{x\in\f_q}f(x)^s=0$ for all $1\le s\le q-2$.
\end{itemize}

Let $q$ be any prime power (even or odd). For each integer $a>0$, let $a^*\in\{1,\dots,q-1\}$ be such that $a^*\equiv a\pmod{q-1}$; we also define $0^*=0$. Note that for all $a\ge 0$ and $x\in\f_q$, $x^a=x^{a^*}$. We always assume that $1\le k\le q-1$; additional assumptions on $k$, when they apply, will be included in the context.

\begin{lem}\label{L2.1}
For $1\le s\le q-1$,
\begin{equation}\label{2.1}
\sum_{x\in\f_q}A_k(x)^s=(-1)^{s+1}\sum_{i=0}^s(-1)^i\binom si\binom{(ki)^*}{(2ks)^*}.
\end{equation}
\end{lem}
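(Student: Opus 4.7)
The plan is to compute the power sum by expanding $A_k(X)^s$ via the binomial theorem, reducing exponents modulo $q-1$ (so that each term becomes a sum of the form $\sum_x x^b(x+1)^a$ with small parameters), and then collecting the resulting closed-form expressions.

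First I would write
\[
A_k(X)^s = X^{ks}[(X+1)^k - X^k]^s = \sum_{i=0}^s(-1)^{s-i}\binom{s}{i}X^{k(2s-i)}(X+1)^{ki},
\]
so that
\[
\sum_{x\in\f_q}A_k(x)^s = \sum_{i=0}^s(-1)^{s-i}\binom{s}{i}\sum_{x\in\f_q}x^{k(2s-i)}(x+1)^{ki}.
\]
Using the identity $y^a = y^{a^*}$ on $\f_q$, each inner sum equals $\sum_x x^b(x+1)^a$ with $a = (ki)^*\in\{0,\dots,q-1\}$ and $b = (k(2s-i))^*\in\{1,\dots,q-1\}$ (the latter because $2s-i\ge s\ge 1$).

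The heart of the proof is an auxiliary identity for such inner sums. Expanding $(x+1)^a$ and invoking the standard fact that $\sum_{x\in\f_q}x^m = -1$ exactly when $(q-1)\mid m$ and $m>0$, only values $j+b\in\{q-1,2(q-1)\}$ contribute. A short case analysis on $a+b\in[1,2(q-1)]$, together with the symmetry $\binom{a}{q-1-b} = \binom{a}{a+b-(q-1)}$ valid for $a+b\ge q-1$, should yield
\[
\sum_{x\in\f_q}x^b(x+1)^a = -\binom{a}{(a+b)^*} - \bigl[(q-1)\mid a+b\bigr].
\]
Because $a+b\equiv 2ks\pmod{q-1}$ and $a+b>0$, we have $(a+b)^*=(2ks)^*$, and the Iverson bracket simplifies to $[(q-1)\mid 2ks]$, which is \emph{independent of $i$}.

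Substituting back would give
\[
\sum_{x\in\f_q}A_k(x)^s = -\sum_{i=0}^s(-1)^{s-i}\binom{s}{i}\binom{(ki)^*}{(2ks)^*} - \bigl[(q-1)\mid 2ks\bigr]\sum_{i=0}^s(-1)^{s-i}\binom{s}{i}.
\]
For $s\ge 1$, the trailing alternating sum equals $(1-1)^s=0$, so the correction term drops out; rewriting $-(-1)^{s-i}=(-1)^{s+1}(-1)^i$ then delivers the claimed formula.

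The main obstacle is the boundary case in the auxiliary identity: when $(q-1)\mid a+b$—either $a+b=q-1$ with $a<q-1$, or the extremal $a=b=q-1$—an extra summand of $-1$ appears that is not captured by $-\binom{a}{(a+b)^*}$ alone. The redeeming feature is that this discrepancy is uniform across $i$, so the alternating binomial sum $\sum_{i=0}^s(-1)^{s-i}\binom{s}{i}$ cleanly annihilates it.
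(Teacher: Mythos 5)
Your proposal is correct and takes essentially the same route as the paper's proof: expand $A_k(X)^s$ by the binomial theorem, reduce via the power sums $\sum_{x\in\f_q^*}x^m$, and kill the boundary contribution arising when $(q-1)\mid 2ks$ using the vanishing of the alternating sum $\sum_{i}(-1)^i\binom{s}{i}$ for $s\ge 1$. The only difference is presentational: the paper performs the case split on $2ks\bmod(q-1)$ at the end instead of isolating your auxiliary identity for $\sum_x x^b(x+1)^a$.
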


\begin{proof}
We have 
\begin{align*}
\sum_{x\in\f_q}A_k(x)^s&=\sum_{x\in\f_q^*}x^{ks}\bigl[(x+1)^k-x^k\bigr]^s\\
&=\sum_{x\in\f_q^*}x^{ks}\sum_i\binom si(x+1)^{ki}(-x^k)^{s-i}\\
&=\sum_{x\in\f_q^*}\sum_i(-1)^{s-i}\binom si x^{2ks-ki}(x+1)^{(ki)^*}\\
&=\sum_{x\in\f_q^*}\sum_i(-1)^{s-i}\binom si x^{2ks-ki}\sum_j\binom{(ki)^*}j x^{(ki)^*-j}\\
&=\sum_{i,j}(-1)^{s-i}\binom si\binom{(ki)^*}j\sum_{x\in\f_q^*}x^{2ks-j}\\
&=(-1)^{s+1}\sum_i(-1)^i\binom si\sum_{j\equiv 2ks\,(\text{mod}\,q-1)} \binom{(ki)^*}j.
\end{align*}
If $2ks\not\equiv 0\pmod{q-1}$,
\[
\sum_{x\in\f_q}A_k(x)^s=(-1)^{s+1}\sum_i(-1)^i\binom si\binom{(ki)^*}{(2ks)^*}.
\]
If $2ks\equiv 0\pmod{q-1}$,
\[
\begin{split}
\sum_{x\in\f_q}A_k(x)^s\,&=(-1)^{s+1}\sum_i(-1)^i\binom si\Bigl[\binom{(ki)^*}0+\binom{(ki)^*}{q-1}\Bigr]\cr
&=(-1)^{s+1}\sum_i(-1)^i\binom si\binom{(ki)^*}{(2ks)^*}.
\end{split}
\]
Hence \eqref{2.1} always holds.
\end{proof}

\begin{lem}\label{L2.2}
\begin{itemize}
  \item [(i)] If $q$ is even,
\begin{equation}\label{2.2}
\sum_{x\in\f_q}B_k(x)^s=\sum_{i=0}^s\binom si\binom{(2ki)*}{(ks)^*},\qquad 1\le s\le q-1.
\end{equation}

\item[(ii)] If $q$ is odd,
\begin{equation}\label{2.3}
\sum_{x\in\f_q}B_k(x)^s=-(-2)^s\sum_{i,j}2^{-i}(-1)^j\binom si\binom ij\binom{(2kj)*}{(ki)^*},\qquad 1\le s\le q-1.
\end{equation}   
\end{itemize} 
\end{lem}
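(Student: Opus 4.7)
My plan is to mimic the proof of Lemma~\ref{L2.1}: expand $B_k(x)^s$ via the binomial theorem, interchange the summations, and collapse the inner sum using the standard identity that $\sum_{x\in\f_q^*}x^m=-1$ when $(q-1)\mid m$ and $0$ otherwise. One checks first that $B_k(0)=0$, so the sum over $\f_q$ equals the sum over $\f_q^*$; and for $x\in\f_q^*$ we have $x^{q-1}=1$, so
\[
B_k(x)=[(x+1)^{2k}-1]x^{-k}-2.
\]

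For part (i), the $-2$ term vanishes in characteristic two, leaving $B_k(x)=[(x+1)^{2k}-1]x^{-k}$. I would expand $B_k(x)^s=\sum_i\binom{s}{i}(x+1)^{2ki}x^{-ks}$ (all signs being trivial in characteristic two), replace $(x+1)^{2ki}$ by $(x+1)^{(2ki)^*}$ (these agree as functions on $\f_q$), and expand the latter binomially in $x$. The power-sum identity then picks out, for each $i$, the single term whose exponent is $\equiv ks\pmod{q-1}$, yielding the coefficient $\binom{(2ki)^*}{(ks)^*}$.

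For part (ii) the $-2$ term survives, so I would first expand
\[
B_k(x)^s=\sum_i\binom{s}{i}\bigl([(x+1)^{2k}-1]x^{-k}\bigr)^i(-2)^{s-i},
\]
then further expand $[(x+1)^{2k}-1]^i=\sum_j\binom{i}{j}(-1)^{i-j}(x+1)^{2kj}$. The same power-sum collapse reduces the $x$-sum to $-\binom{(2kj)^*}{(ki)^*}$; a short sign rearrangement using $(-1)^{s-i}(-1)^{i-j}=(-1)^{s-j}$ together with the invertibility of $2$ in odd characteristic should then match the target expression $-(-2)^s\sum_{i,j}2^{-i}(-1)^j\binom{s}{i}\binom{i}{j}\binom{(2kj)^*}{(ki)^*}$.

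The one subtle point, already implicit in the proof of Lemma~\ref{L2.1}, is the boundary case where $(ks)^*=q-1$ in (i), or $(ki)^*=q-1$ in (ii). There the congruence selecting the nonzero power sum admits an extra solution at exponent $0$ in addition to $q-1$, producing a stray $+1$ summand on top of $\binom{\cdot}{q-1}$. In (i) these strays combine into $\sum_i\binom{s}{i}=2^s$, which vanishes in characteristic two. In (ii) the offending indices have $i\ge 1$ (since $(k\cdot 0)^*=0\ne q-1$), so the inner sum over $j$ reduces to $\sum_j\binom{i}{j}(-1)^j=0$. Either way the spurious contributions cancel, so the stated formulas are exact. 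I expect this bookkeeping to be the only real obstacle; the rest is a mechanical binomial expansion.
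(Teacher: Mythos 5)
Your proposal is correct and follows essentially the same route as the paper's proof: binomial expansion, interchange of summations, collapse of the inner sum using the fact that $\sum_{x\in\f_q^*}x^m$ equals $-1$ when $(q-1)\mid m$ and $0$ otherwise, and the same bookkeeping for the residue class $0\bmod(q-1)$, which has the two representatives $0$ and $q-1$. The only omission is tiny: in (ii), besides the indices $i\ge 1$ with $(ki)^*=q-1$ (extra solution at exponent $0$, killed by $\sum_j(-1)^j\binom ij=0$ as you say), the index $i=0$ also picks up an extra solution at exponent $q-1$ whenever $(2kj)^*=q-1$; that term carries the factor $\binom 0j=0$ for $j>0$, so it too vanishes --- this is in fact the sub-case the paper's proof singles out explicitly.
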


\begin{proof}
(i) If $k=q-1$, 
\[
B_k(x)=(x+1)^{2(q-1)}-1=
\begin{cases}
1&\text{if}\ x=1,\cr
0&\text{if}\ x\in\f_q\setminus\{1\},
\end{cases}
\]
so the left side of \eqref{2.2} is $1$. On the other hand, the right side of \eqref{2.2} equals 
\[
\sum_{i=1}^s\binom si=1,
\]
and hence \eqref{2.2} holds.

Now assume that $1\le k< q-1$. The calculation is identical to the proof of Lemma~\ref{L2.1}. We have
\begin{align*}
\sum_{x\in\f_q}B_k(x)^s\,&=\sum_{x\in\f_q^*}\Bigl(\bigl[(x+1)^{2k}+1\bigr]x^{-k}\Bigr)^s\\
&=\sum_{x\in\f_q^*}x^{-ks}\bigl[(x+1)^{2k}+1\bigr]^s\\
&=\sum_{x\in\f_q^*}x^{-ks}\sum_i\binom si(x+1)^{(2ki)^*}\\
&=\sum_{x\in\f_q^*}x^{-ks}\sum_i\binom si\sum_j\binom{(2ki)^*}j x^j\\
&=\sum_{i,j}\binom si\binom{(2ki)^*}j\sum_{x\in\f_q^*}x^{j-ks}\\
&=\sum_i\binom si\sum_{j\equiv ks\,(\text{mod}\,q-1)}\binom{(2ki)^*}j.
\end{align*}
If $ks\not\equiv 0\pmod{q-1}$,
\[
\sum_{x\in\f_q}B_k(x)^s=\sum_i\binom si\binom{(2ki)^*}{(ks)^*}.
\]
If $ks\equiv0\pmod{q-1}$,
\[
\sum_{x\in\f_q}B_k(x)^s=\sum_i\binom si\Bigl[\binom{(2ki)^*}0+\binom{(2ki)^*}{q-1}\Bigr]=\sum_i\binom si\binom{(2ki)^*}{(ks)^*}.
\]

(ii) We have
\begin{align*}
\sum_{x\in\f_q}B_k(x)^s\,&=\sum_{x\in\f_q^*}\Bigl(\bigl[(x+1)^{2k}-1\bigr]x^{-k}-2\Bigr)^s\\
&=\sum_{x\in\f_q^*}\sum_i\binom si\bigl[(x+1)^{2k}-1\bigr]^ix^{-ki}(-2)^{s-i}\\
&=(-2)^s\sum_{x\in\f_q^*}\sum_i(-2)^{-i}\binom six^{-ki}\sum_j\binom ij(x+1)^{(2kj)^*}(-1)^{i-j}\\
&=(-2)^s\sum_{x\in\f_q^*}\sum_{i,j}2^{-i}(-1)^j\binom si\binom ijx^{-ki}\sum_l\binom{(2kj)^*}l x^l\\
&=(-2)^s\sum_{i,j,l}2^{-i}(-1)^j\binom si\binom ij\binom{(2kj)^*}l\sum_{x\in\f_q^*}x^{l-ki}\\
&=-(-2)^s\sum_{i,j}\sum_{l\equiv ki\,(\text{mod}\,q-1)}2^{-i}(-1)^j\binom si\binom ij\binom{(2kj)^*}l.
\end{align*}
Note that if $l\equiv ki\pmod{q-1}$ and $0\le l\le (2kj)^*$, then either $l=(ki)^*$ or $i=0$, $j>0$ and $l=q-1$; in the latter case, $\binom ij=0$. Therefore, we have
\[
\sum_{x\in\f_q}B_k(x)^s=-(-2)^s\sum_{i,j}2^{-i}(-1)^j\binom si\binom ij\binom{(2kj)^*}{(ki)^*}.
\]
\end{proof}

\begin{thm}\label{T2.3}
\begin{itemize}
  \item [(i)] $A_k$ is a PP of $\f_q$ if and only if $\text{\rm gcd}(k,q-1)=1$ and 
\begin{equation}\label{2.4}
\sum_i(-1)^i\binom si\binom{(ki)^*}{(2ks)^*}=0\quad\text{for all}\ 1\le s\le q-2.
\end{equation}

\item[(ii)] $B_k$ is a PP of $\f_q$ if and only if $\text{\rm gcd}(k,q-1)=1$ and
\begin{equation}\label{2.5}
\sum_i(-1)^i\binom si\binom{(2ki)^*}{(ks)^*}=(-2)^s\quad\text{for all}\ 1\le s\le q-2.
\end{equation}
\end{itemize}
\end{thm}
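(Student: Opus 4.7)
My plan is to apply Hermite's criterion as stated in the paper: $f \in \f_q[X]$ is a PP of $\f_q$ iff $0$ is its only root and $\sum_{x\in\f_q}f(x)^s=0$ for all $1\le s\le q-2$; I will also freely use the equivalent form in which the root condition is replaced by $\sum_{x\in\f_q}f(x)^{q-1}=-1$.

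For part~(i), applying Hermite's power sum condition to $A_k$ and invoking Lemma~\ref{L2.1} immediately yields \eqref{2.4}, since the sign $(-1)^{s+1}$ is a nonzero unit. For the root condition, observe that $A_k(x)=0$ with $x\in\f_q^*$ is equivalent to $(1+x^{-1})^k=1$ with $1+x^{-1}\ne 1$; as $x\mapsto 1+x^{-1}$ sends $\f_q^*$ bijectively to $\f_q\setminus\{1\}$, such a nonzero root exists iff $\f_q^*$ contains a $k$-th root of unity other than $1$, iff $\gcd(k,q-1)>1$. Combining the two conditions gives part~(i).

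For part~(ii), reducing Hermite's power sum condition to \eqref{2.5} requires some algebra. Applying $\binom{s}{i}\binom{i}{j}=\binom{s}{j}\binom{s-j}{i-j}$ in Lemma~\ref{L2.2}(ii) and regrouping with $m=i+j$ gives
\[
\sum_{x\in\f_q}B_k(x)^s=-(-1)^s\sum_{m=0}^s 2^{s-m}\binom{s}{m}\Phi(m),\quad \Phi(m):=\sum_j(-1)^j\binom{m}{j}\binom{(2kj)^*}{(km)^*}.
\]
Since $\Phi(0)=1=(-2)^0$, the equations $\sum_x B_k(x)^s=0$ for $1\le s\le q-2$ form a unit-triangular system in $\Phi(1),\dots,\Phi(q-2)$. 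Induction on $s$, using $\sum_{m=0}^s(-1)^m\binom{s}{m}=0$, shows these equations are equivalent to $\Phi(s)=(-2)^s$ for $1\le s\le q-2$, i.e., to \eqref{2.5}.

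For the gcd condition in part~(ii), necessity follows from an injectivity argument: if $d=\gcd(k,q-1)>1$ with primitive $d$-th root of unity $\omega\in\f_q^*$, then $B_k(\omega x)-B_k(x)=[(\omega x+1)^{2k}-(x+1)^{2k}]x^{-k}$, and $(\omega x+1)^{2k}=(x+1)^{2k}$ has nonzero solutions $x=(\zeta-1)/(\omega-\zeta)$ for each $\gcd(2k,q-1)$-th root of unity $\zeta\ne 1,\omega$, forcing $B_k$ non-injective whenever $\gcd(2k,q-1)\ge 3$; the corner case $\gcd(2k,q-1)=2$ (forcing $k$ even and $q\equiv 3\pmod 4$) requires a separate direct analysis, e.g., noting $B_k(-1)=-(-1)^k-2=0$ when $p=3$ and $k$ is even. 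Conversely, assume $\gcd(k,q-1)=1$ and \eqref{2.5}; then $\gcd(2k,q-1)=2$ for odd $q$, and evaluating the formula above at $s=q-1$ with $\binom{q-1}{m}\equiv(-1)^m$ and $2^{q-1}\equiv 1$ collapses the sum to $\sum_{x\in\f_q}B_k(x)^{q-1}=1-\Phi(q-1)=1-\gcd(2k,q-1)=-1$ (after a direct check that $\Phi(q-1)\equiv\gcd(2k,q-1)\pmod p$). Combined with \eqref{2.5}, the alternative form of Hermite's criterion then concludes $B_k$ is a PP. The main obstacle is the residual corner case in the injectivity argument for the necessity of $\gcd(k,q-1)=1$, which may need further case-by-case treatment outside characteristic $3$.
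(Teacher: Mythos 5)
Most of your argument tracks the paper's own proof: part (i) is identical (Hermite plus Lemma~\ref{L2.1}, with the root condition reduced to $k$-th roots of unity via $x\mapsto 1+x^{-1}$), and your power-sum reduction in part (ii) is the paper's argument in light disguise — your $\Phi(m)$ is $2^mS_m$ in the paper's notation, your displayed formula is just \eqref{2.3} with the $j$-sum grouped inside (the detour through $\binom{s}{i}\binom{i}{j}=\binom{s}{j}\binom{s-j}{i-j}$ and ``$m=i+j$'' is unnecessary and, as described, does not actually produce that formula, though the formula itself is correct), and the unit-triangular inversion is the paper's recursion \eqref{2.7}. Your evaluation $\sum_{x}B_k(x)^{q-1}=1-\Phi(q-1)$ with $\Phi(q-1)\equiv\gcd(2k,q-1)$ is correct and is the same computation the paper performs when checking that the $i=q-1$ case of \eqref{2.8} holds automatically. (You do omit the easy even-$q$ case of (ii), which the theorem as stated covers and the paper dispatches separately via Lemma~\ref{L2.2}(i).)

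The genuine gap is precisely the corner case you flag and do not close: proving that $B_k$ a PP forces $\gcd(k,q-1)=1$ when $k$ is even, $q\equiv 3\pmod 4$, and $\gcd(k,(q-1)/2)=1$ — i.e.\ when $\gcd(2k,q-1)=2$, so your $\zeta$-parametrization produces no collision. This is not a routine residual case; it is the one hard point of the theorem, and the paper defers it to Lemma~\ref{L3.5}, step $1^\circ$, which needs two further ideas. First (Lemma~\ref{L3.4}), since $B_k$ is a PP, the map $x\mapsto[(x+1)^{2k}-1]x^{-k}$, extended by $0\mapsto 2$, is a bijection of $\f_q$, so the product of its nonzero values equals $-1$; evaluating $\prod_{x\in\f_q\setminus\{\pm1\}}(x^k+1)(x^k-1)$ by factoring $X^{(q-1)/2}-1$ over the nonzero squares yields $2^{k-1}=-1$ for even $k$, so the order of $2$ in $\f_p^*$ is even. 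Second, \eqref{2.5} at $s=(q-1)/2$ — already available at this point, since the power-sum equivalence was established under the weaker hypothesis $\gcd(k,(q-1)/2)=1$ — collapses to $2^{(q-1)/2}=1$ because $\binom{(2ki)^*}{q-1}\ne 0$ only for $i=(q-1)/2$; as $(q-1)/2$ is odd, the order of $2$ is odd, a contradiction. Your characteristic-$3$ observation covers only $p=3$, so without an argument of this kind the necessity of $\gcd(k,q-1)=1$ in part (ii) remains unproved.
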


We remind the reader that according to our convention, \eqref{2.4} and \eqref{2.5} are to be treated as equations in characteristic $p$.

\begin{proof}[Proof of Theorem~\ref{T2.3}] We prove the claims using Hermite's criterion.

\medskip
(i)  
Clearly, $0$ is the only root of $A_k$ in $\f_q$ if and only if $\text{gcd}(k,q-1)=1$. By \eqref{2.1}, $\sum_{x\in\f_q}A_k(x)^s=0$ for all $1\le s\le q-2$ if and only if \eqref{2.4} holds.

\medskip
(ii) We consider even and odd $q$'s separately. 

\medskip
{\bf Case 1.} Assume that $q$ is even. We have $B_k=[(X+1)^{2k}-1]X^{q-1-k}$.

If $q=2$, then $k=1$ and $B_k=X^2$, which is a PP of $\f_2$. In this case, \eqref{2.5} is vacuously satisfied.

Now assume that $q>2$. 
Clearly, $0$ is the only root of $B_k$ in $\f_q$ if and only if $\text{gcd}(k,q-1)=1$. By \eqref{2.2}, $\sum_{x\in\f_q}B_k(x)^s=0$ for all $1\le s\le q-2$ if and only if  \eqref{2.5} holds.

\medskip
{\bf Case 2.} Assume that $q$ is odd.

\medskip
$1^\circ$ We claim that if $B_k$ is a PP of $\f_q$, then $\text{gcd}(k,(q-1)/2)=1$. Otherwise, $\text{gcd}(2k,q-1)>2$ and the equation $(x+1)^{2k}-1=0$ has at least two roots $x_1,x_2\in\f_q^*$. Then $B_k(x_1)=-2=B_k(x_2)$, which is a contradiction.

\medskip
$2^\circ$ We claim that $B_k$ is a PP of $\f_q$ if and only if $\text{gcd}(k,(q-1)/2)=1$ and \eqref{2.5} holds. 

By $1^\circ$ and \eqref{2.3}, we only have to show that under the assumption that $\text{gcd}(k,(q-2)/2)=1$, 
\begin{equation}\label{2.6}
\sum_{i,j}2^{-i}(-1)^j\binom si\binom ij\binom{(2kj)^*}{(ki)^*}=
\begin{cases}
0&\text{for}\ 1\le s\le q-2,\cr
1&\text{for}\ s=q-1,
\end{cases}
\end{equation}
if and only if \eqref{2.5} holds. Set
\[
S_i=2^{-i}\sum_j(-1)^j\binom ij\binom{(2kj)^*}{(ki)^*},\qquad 0\le i\le q-1.
\]
Then \eqref{2.6} is equivalent to
\begin{equation}\label{2.7}
\sum_i\binom si S_i=
\begin{cases}
1&\text{if}\ s=0,\cr
0&\text{if}\ 1\le s\le q-2,\cr
1&\text{if}\ s=q-1.
\end{cases}
\end{equation}
Equation~\eqref{2.7} is a recursion for $S_i$, which has a unique solution 
\[
S_i=
\begin{cases}
(-1)^i&\text{if}\ 0\le i\le q-2,\cr
2&\text{if}\ i=q-1.
\end{cases}
\]
Therefore, \eqref{2.6} is equivalent to 
\begin{equation}\label{2.8}
\sum_j(-1)^j\binom ij\binom{(2kj)^*}{(ki)^*}=
\begin{cases}
(-2)^i&\text{if}\ 0\le i\le q-2,\cr
2&\text{if}\ i=q-1.
\end{cases}
\end{equation}
It remains to show that when $i=0$ and $q-1$, \eqref{2.8} is automatically satisfied. When $i=0$, \eqref{2.8} is clearly satisfied. When $i=q-1$, 
\[
\begin{split}
\sum_j(-1)^j\binom ij\binom{(2kj)^*}{(ki)^*}\,&=\sum_{j=\frac{q-1}2,\,q-1}(-1)^j\binom{q-1}j\binom{(2kj)^*}{q-1}\cr
&=(-1)^{\frac{q-1}2}\binom{-1}{\frac{q-1}2}+(-1)^{q-1}\binom{-1}{q-1}=2.
\end{split}
\]

$3^\circ$ To complete the proof of Case 2, it remains to show that if $B_k$ is a PP of $\f_q$, then $\text{gcd}(k,q-1)=1$, that is, $k$ must be odd. This is given by Lemma~\ref{L3.5} later.
\end{proof}

\begin{rmk}\label{R2.4}\rm 
In \eqref{2.5}, we have 
\[
\binom{(2ki)^*}{(ks)^*}=\binom{2ki}{ks}\qquad\text{if}\ 0\le i\le s<\frac{q-1}k.
\]
In fact, if $2ki\le q-1$, then $(2ki)^*=2ki$. If $2ki\ge q$, then $(2ki)^*=2ki-(q-1)<ki\le ks<q-1$, and hence $\binom{(2ki)^*}{(ks)^*}=0$. On the other hand, the sum $ks+(2ki-ks)$ has a carry in base $q$, and hence we also have $\binom{2ki}{ks}=0$.
\end{rmk} 

%%%%%%%%%%%%%%%%%%%%%%%%%%%%%%%%%%%%%%%%%

\section{Facts about $A_k$ and $B_k$}\label{S4}

The permutation properties of $A_k$ and $B_k$ are encoded, respectively, in \eqref{2.4} and \eqref{2.5}, which are not transparent in $q$ and $k$. In this section, we extract from these equations some facts about $A_k$ and $B_k$ that are more explicit in $q$ and $k$. The results collected here include both known and new. Slightly different proofs of the known results are provided for the reader's convenience.

Assume that $q>2$ and $1\le k\le q-1$, and let 
\begin{equation}\label{3.1}
a:=\Bigl\lfloor\frac{q-1}k\Bigr\rfloor.
\end{equation}
When $\text{gcd}(k,q-1)=1$, let $k',b\in\{1,\dots,q-1\}$ be such that 
\begin{equation}\label{3.2}
k'k\equiv 1\pmod{q-1},\qquad bk\equiv-1\pmod {q-1},
\end{equation}
and set
\begin{equation}\label{3.3}
c:=\Bigl\lfloor\frac{q-1}{k'}\Bigr\rfloor.
\end{equation}
Note that 
\begin{equation}\label{3.4}
\frac{q-1}{a+1}<k\le\frac{q-1}a
\end{equation}
and
\begin{equation}\label{3.5}
\frac{q-1}{c+1}< k'\le\frac{q-1}c.
\end{equation}

The following obvious fact will be used frequently.

\begin{fac}\label{F3.1}\rm
$A_k$ is a PP of $\f_q$ if and only if $A_{(pk)^*}$ is. The same is true for $B_k$.
\end{fac}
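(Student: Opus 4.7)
The plan is to show that $A_{(pk)^*}$ and $A_k$ (resp.\ $B_{(pk)^*}$ and $B_k$) are related by the Frobenius $x\mapsto x^p$, which is a bijection of $\f_q$, so that one is a PP of $\f_q$ exactly when the other is.

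For the first statement, I would work with $A_k$ as a \emph{function} on $\f_q$. Since $x^{q-1}=1$ for every $x\in\f_q^*$ and $A_k(0)=0=A_{(pk)^*}(0)$, replacing any exponent by its reduction modulo $q-1$ (keeping $(pk)^*\ge 1$) does not change the values. Using that raising to the $p$-th power commutes with addition in characteristic $p$, I compute, for all $x\in\f_q$,
\[
A_k(x)^p = x^{pk}\bigl[(x+1)^{pk}-x^{pk}\bigr] = x^{(pk)^*}\bigl[(x+1)^{(pk)^*}-x^{(pk)^*}\bigr]=A_{(pk)^*}(x).
\]
Since $x\mapsto x^p$ is a bijection of $\f_q$, $A_k$ permutes $\f_q$ if and only if $A_{(pk)^*}$ does.

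For $B_k$, the same idea applies. I would first rewrite $B_k$ as a function on $\f_q^*$ using $x^{q-1}=1$, namely $B_k(x)=[(x+1)^{2k}-1]x^{-k}-2$, and check that $B_k(0)=0=B_{(pk)^*}(0)$ (the terms $X^{q-1-k}$ and $X^{q-1}$ both vanish at $0$ since $1\le k\le q-1$). Then, in characteristic $p$,
\[
B_k(x)^p=\bigl[(x+1)^{2pk}-1\bigr]x^{-pk}-2=\bigl[(x+1)^{2(pk)^*}-1\bigr]x^{-(pk)^*}-2=B_{(pk)^*}(x)
\]
for every $x\in\f_q^*$, and equality at $x=0$ was just noted. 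Again, since Frobenius is a bijection of $\f_q$, $B_k$ is a PP of $\f_q$ iff $B_{(pk)^*}$ is.

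There is no real obstacle here: the only slightly delicate point is bookkeeping for the reduction $pk\mapsto(pk)^*$ modulo $q-1$ (legitimate because we are comparing functions on $\f_q$, not formal polynomials) and verifying that the boundary value at $x=0$ matches on both sides. Once those are handled, the identity $f_{(pk)^*}=f_k^{\,p}$ (for $f\in\{A,B\}$) gives the fact immediately.
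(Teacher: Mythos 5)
Your proof is correct and is exactly the justification intended: the paper states this as an ``obvious fact'' with no proof, and the Frobenius identity $A_k(x)^p=A_{(pk)^*}(x)$ (resp.\ $B_k(x)^p=B_{(pk)^*}(x)$) as functions on $\f_q$ is the canonical reason. One tiny slip in the bookkeeping: when $k=q-1$ the term $X^{q-1-k}=X^0$ does \emph{not} vanish at $0$, but $B_k(0)=0$ still holds because the factor $(X+1)^{2k}-1$ vanishes there, so the conclusion is unaffected.
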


\begin{lem}\label{L3.2}
If $1<k\le q-1$ and $A_k$ is a PP of $\f_q$, then
\begin{equation}\label{3.6}
\binom{ka}{q-1-ka}\equiv 0\pmod p,
\end{equation}
\begin{equation}\label{3.7}
\binom{2c}c\equiv 0\pmod p.
\end{equation}
\end{lem}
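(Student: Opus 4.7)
The approach is to apply Theorem~\ref{T2.3}(i), which tells us that whenever $A_{k}$ is a PP the identity
\[
\sum_{i=0}^{s}(-1)^{i}\binom{s}{i}\binom{(ki)^{*}}{(2ks)^{*}}\equiv 0\pmod{p}
\]
holds for every $1\le s\le q-2$, and, for each of \eqref{3.6} and \eqref{3.7}, to choose a value of $s$ for which this sum collapses modulo $p$ to essentially one non-trivial binomial.

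For \eqref{3.6} I would take $s=a$. Since $k>1$ and $\gcd(k,q-1)=1$, the quotient $(q-1)/k$ is not an integer and lies strictly between $a$ and $a+1$. If $ka\le(q-1)/2$ then the case $2ka=q-1$ is ruled out by $\gcd(k,q-1)=1$, so $q-1-ka>ka$ and $\binom{ka}{q-1-ka}=0$ already in $\mathbb{Z}$. Otherwise $(2ka)^{*}=2ka-(q-1)$ is a positive integer less than $q-1$; for $0\le i\le a$ we have $(ki)^{*}=ki$, and $\binom{ki}{2ka-(q-1)}$ vanishes unless $ki\ge 2ka-(q-1)$, i.e.\ $i\ge 2a-(q-1)/k$. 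The pinching $(q-1)/k\in(a,a+1)$ forces $2a-(q-1)/k\in(a-1,a)$, so the only admissible integer is $i=a$. The identity then collapses to $(-1)^{a}\binom{ka}{2ka-(q-1)}\equiv 0\pmod{p}$, and the symmetry $\binom{ka}{2ka-(q-1)}=\binom{ka}{q-1-ka}$ yields \eqref{3.6}.

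For \eqref{3.7} the plan is to mirror this argument with $(k',c)$ playing the role of $(k,a)$, exploiting the dual pinching $(q-1)/k'\in(c,c+1)$ that comes from $\gcd(k',q-1)=1$ and $k'>1$. A natural first choice is $s=k'c$, which gives $(2ks)^{*}=2c$ via $kk'\equiv 1\pmod{q-1}$ together with $2c\le q-1$ (from $k'\ge 2$); the admissible indices $i$ are then the preimages of $\{2c,2c+1,\dots,q-1\}$ under the bijection $i\mapsto(ki)^{*}$ that lie in $\{1,\dots,k'c\}$, and these can be enumerated via the Euclidean division of $k'j$ by $q-1$.

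The hard part, and the principal difference from \eqref{3.6}, is that the identity does not in general collapse to a single surviving term: several indices contribute a priori, and the binomial we ultimately wish to isolate, $\binom{2c}{c}$, does not appear directly from $s=k'c$ (which produces $\binom{(ki)^{*}}{2c}$ rather than $\binom{(ki)^{*}}{c}$). The main obstacle is therefore to show, by combining Lucas's theorem on the base-$p$ digits of the surviving $(ki)^{*}$ with the already-established \eqref{3.6}, that all but one of the surviving contributions vanish modulo $p$ and that the surviving one is a unit multiple of $\binom{2c}{c}$. If $s=k'c$ does not yield a clean enough cancellation, I would retry with a value of $s$ solving $(2ks)^{*}=c$ directly (for example $s=q-2$), so that the outer binomial becomes $\binom{\cdot}{c}$ and $\binom{2c}{c}$ is produced by the term with $(ki)^{*}=2c$.
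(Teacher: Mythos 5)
Your treatment of \eqref{3.6} is correct and is essentially the paper's argument: take $s=a$ in the criterion \eqref{2.4}, use $\gcd(k,q-1)=1$ to pinch $(q-1)/k$ strictly between $a$ and $a+1$, observe $(2ka)^*=2ka-(q-1)$, and conclude that only the term $i=a$ survives. (Your first subcase $ka\le(q-1)/2$ is in fact vacuous, since $2ka\ge k(a+1)>q-1$ always, but that does no harm.)

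For \eqref{3.7}, however, there is a genuine gap. You have correctly identified the strategy --- apply \eqref{2.4} with an $s$ built from $k'$ and $c$ --- but not the choice of $s$ that makes the sum collapse, and you acknowledge yourself that with $s=k'c$ several terms survive and the binomial $\binom{2c}{c}$ never appears. Your fallback does not work either: with $s=q-2$ one gets $2ks\equiv-2k\pmod{q-1}$, which is not $c$ in general. The paper's proof instead takes $s=(cb)^*$, where $bk\equiv-1\pmod{q-1}$, i.e.\ $s=q-1-ck'$, so that $(2ks)^*=q-1-2c$ (after first disposing of the trivial cases: if $c>(q-1)/2$ then $c+c$ carries in base $p$ and \eqref{3.7} is automatic, and $c=(q-1)/2$ forces $k'=1$, contradicting $k>1$). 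The outer binomial $\binom{(ki)^*}{q-1-2c}$ is then nonzero only when $(ki)^*=q-1-l$ for some $0\le l\le 2c$, and the whole point of using the ``negative'' multiple $(cb)^*$ rather than $ck'$ is that the corresponding indices $i(l)$ --- computed explicitly from $(q-1)/(c+1)<k'<(q-1)/c$ as $q-1-lk'$ for $0\le l\le c$ and $2(q-1)-lk'$ for $c+1\le l\le 2c$ --- all satisfy $i(l)>s$ except at $l=c$, where $i(c)=s$. Hence $\binom{s}{i(l)}=0$ for every $l\ne c$, and \eqref{2.4} collapses to the single term
\begin{equation*}
0=(-1)^s\binom{q-1-c}{q-1-2c}=(-1)^s\binom{-1-c}{c}=(-1)^{s+c}\binom{2c}{c},
\end{equation*}
which is \eqref{3.7}. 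Without this choice of $s$ (or an equally effective one), the ``hard part'' you flag remains unresolved, so your plan for \eqref{3.7} does not go through as written.
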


\begin{proof}
$1^\circ$ We first prove \eqref{3.6}. By Theorem~\ref{T2.3} (i), $\text{gcd}(k,q-1)=1$, and hence \eqref{3.4} becomes
\begin{equation}\label{3.8}
\frac{q-1}{a+1}<k<\frac{q-1}a.
\end{equation}
Therefore
\[
q-1<k(a+1)\le 2ka<2(q-1),
\]
which implies that $(2ka)^*=2ka-q+1$. By \eqref{2.4},
\[
\begin{split}
0\,&=\sum_i(-1)^i\binom ai\binom{(ki)^*}{(2ka)^*}=\sum_{2a-\frac{q-1}k\le i\le a}(-1)^i\binom ai\binom{ki}{2ka-q+1}\cr
&=(-1)^a\binom{ka}{2ka-q+1}=(-1)^a\binom{ka}{q-1-ka}.
\end{split}
\]
(Note: in the above, $2a-(q-1)/k\le i\le a$ implies that $i=a$.)

\medskip

$2^\circ$ We now prove \eqref{3.7}. If $c>(q-1)/2$, \eqref{3.7} is automatically satisfied. So we assume that $c\le (q-1)/2$. Since $\text{gcd}(k',q-1)=1$, \eqref{3.5} becomes 
\begin{equation}\label{3.9}
\frac{q-1}{c+1}< k'<\frac{q-1}c.
\end{equation}
If $c=(q-1)/2$, then \eqref{3.9} implies that $k'=1$. It follows that $k=1$, which is a contradiction. Thus $c<(q-1)/2$. Set $s=(cb)^*$. Then
\begin{equation}\label{3.10}
s=q-1-ck',
\end{equation}
and
\begin{equation}\label{3.11}
(2ks)^*=q-1-2c.  
\end{equation}
By \eqref{2.4},
\begin{equation}\label{3.12}
0=\sum_i(-1)^i\binom si\binom{(ki)^*}{(2ks)^*}=\sum_i(-1)^i\binom si\binom{(ki)^*}{q-1-2c}.
\end{equation}
For each $0\le l\le 2c$, let $i(l)\in\{0,\dots,q-1\}$ be such that $(ki(l))^*=q-1-l$. Because of \eqref{3.9}, we have
\[
i(l)=
\begin{cases}
q-1-lk'&\text{if}\ 0\le l\le c,\cr
2(q-1)-lk'&\text{if}\ c+1\le l\le 2c.
\end{cases}
\]
When $0\le l<c$,
\[
i(l)=q-1-lk'>q-1-ck'=s.
\]
When $c<l\le 2c$, we also have
\[
i(l)=2(q-1)-lk'>q-1-ck'=s.
\]
When $l=c$, $i(l)=s$. Therefore \eqref{3.12} becomes
\[
0=(-1)^s\binom{q-1-c}{q-1-2c}=(-1)^s\binom{q-1-c}{c}=(-1)^s\binom{-1-c}c=(-1)^{s+c}\binom{2c}c.
\]
\end{proof}

\begin{cor}\label{C3.3}
Conjecture A is true for $q=p$.
\end{cor}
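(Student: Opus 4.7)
The plan is to extract the corollary directly from part (ii) of Lemma~\ref{L3.2}. Since $A_1=X$ is trivially a PP of $\f_p$, it suffices to rule out the possibility that $A_k$ is a PP of $\f_p$ for some $1<k\le p-1$ (because the only power of $p$ in $\{1,\dots,p-1\}$ is $1$).

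So I would assume, for contradiction, that $1<k\le p-1$ and that $A_k$ is a PP of $\f_p$. By Theorem~\ref{T2.3}(i) we have $\gcd(k,p-1)=1$, so the inverse $k'\in\{1,\dots,p-1\}$ of $k$ modulo $p-1$ is well-defined, and we set $c=\lfloor(p-1)/k'\rfloor$ as in \eqref{3.3}. Lemma~\ref{L3.2} then yields
\[
\binom{2c}{c}\equiv 0\pmod p.
\]
Because $0\le c\le p-1$, Lucas' theorem tells us that $\binom{2c}{c}\equiv 0\pmod p$ forces a carry in the base-$p$ addition $c+c$, which is equivalent to $2c\ge p$, i.e., $c\ge (p+1)/2$.

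Combining this with $c\le (p-1)/k'$ gives
\[
\frac{p+1}{2}\le\frac{p-1}{k'},\qquad\text{hence}\qquad k'\le\frac{2(p-1)}{p+1}<2,
\]
so $k'=1$. But $k'=1$ together with $k k'\equiv 1\pmod{p-1}$ and $1\le k\le p-1$ forces $k=1$, contradicting $k>1$. This contradiction completes the proof. The argument is entirely mechanical once Lemma~\ref{L3.2}(ii) is in hand, so there is no real obstacle — the only point that warrants a moment of care is checking that $c$ lies in the range where Lucas' theorem applies cleanly (which it does because $k'\ge 1$ forces $c\le p-1$).
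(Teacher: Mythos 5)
Your argument is correct, but it routes through the other half of Lemma~\ref{L3.2}: you use \eqref{3.7} (the condition $\binom{2c}{c}\equiv 0\pmod p$ on $c=\lfloor(p-1)/k'\rfloor$), whereas the paper uses \eqref{3.6} (the condition $\binom{ka}{q-1-ka}\equiv 0\pmod p$ on $a=\lfloor(p-1)/k\rfloor$). The paper's route is a one-liner: for $q=p$ one has $0\le p-1-ka\le ka\le p-1$, so both entries of $\binom{ka}{p-1-ka}$ are single base-$p$ digits and the coefficient is automatically nonzero mod $p$, contradicting \eqref{3.6} directly. Your route needs the extra chain $\binom{2c}{c}\equiv 0\Rightarrow 2c\ge p\Rightarrow k'\le 2(p-1)/(p+1)<2\Rightarrow k'=1\Rightarrow k=1$, but every step is sound: $c\le p-1$ is a single digit so Lucas reduces the vanishing of $\binom{2c}{c}$ to a carry in $c+c$, i.e.\ to $2c\ge p$, and the hypotheses of Lemma~\ref{L3.2} ($1<k\le q-1$ and $A_k$ a PP) are exactly what you have, so invoking \eqref{3.7} is legitimate. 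The two arguments are of essentially equal depth; the only thing the paper's version buys is brevity, since it avoids introducing $k'$ and $c$ altogether.
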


\begin{proof}
Let $1<k\le p-1$. Since $0\le p-1-ka\le ka\le p-1$, we have
\[
\binom{ka}{p-1-ka}\not\equiv 0\pmod p.
\]
By Lemma~\ref{L3.2}, $A_k$ is not a PP of $\f_q$.
\end{proof}

\begin{rmk}\label{R4.4}\rm 
Equation~\eqref{3.6} is contained in \cite[Theorem~1]{DLW-FFA-2007}, and Corollary~\ref{C3.3} is implied by the proof of \cite[Theorem~1]{DLW-FFA-2007}.
\end{rmk}

\begin{lem}\label{L3.3.1}
Assume that $A_k$ is a PP of $\f_q$. Then all the base $p$ digits of $k'$ are $0$ or $1$.
\end{lem}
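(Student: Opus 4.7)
The plan is to extract information about the base-$p$ digits of $k'$ from Hermite's criterion for $A_k$, as expressed in Theorem~\ref{T2.3}(i), through clever substitutions for the exponent $s$. Introduce the integer $b\in\{1,\dots,q-1\}$ satisfying $bk\equiv -1\pmod{q-1}$, so $b=q-1-k'$. For each $j$ with $2j\le q-1$, substituting $s=(jb)^*$ gives $(ks)^*=q-1-j$ and $(2ks)^*=q-1-2j$; reindexing via $(ki)^*=q-1-l$ collapses \eqref{2.4} to a sum over the small range $l\in\{0,1,\dots,2j\}$ with $i=i(l)=(lb)^*$.

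First, take $j=1$, so $s=b$. Only $l=1$ and $l=2$ can contribute, since the $l=0$ index forces $i=q-1>s$. Case analysis on $2k'>q-1$ versus $2k'\le q-1$ shows the former gives a nonzero residue equal to $-2(-1)^{k'}$, impossible in characteristic $p>2$, so $k'\le (q-1)/2$; and the latter yields the identity
\[
\binom{q-1-k'}{k'}\equiv 2(-1)^{k'}\pmod p.
\]
Writing $k'=\sum_j d_j p^j$ so that $q-1-k'=\sum_j(p-1-d_j)p^j$, Lucas' theorem factors the left side as $\prod_j\binom{p-1-d_j}{d_j}$. Since the right side is $\pm 2\in \f_p^{\times}$, every factor must be nonzero, whence $d_j\le (p-1)/2$ for every $j$. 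When $p=3$ this already completes the proof, since $(p-1)/2=1$.

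For $p\ge 5$ I would iterate with $j=2,3,\dots$. The case $j=2$ (after sub-case analysis by whether $4k'\le q-1$, $(q-1)/4<k'\le (q-1)/3$, or $(q-1)/3<k'\le (q-1)/2$) yields either the three-term relation
\[
6\;-\;4(-1)^{k'}\binom{q-1-2k'}{k'}\;+\;\binom{q-1-2k'}{2k'}\;\equiv\;0\pmod p,
\]
or a degenerate variant in which only the leading term $6$ survives, ruling out $k'>(q-1)/3$ since $6\not\equiv 0\pmod p$ for $p\ge 5$. Expanding the remaining binomials via Lucas, and using that $2k'$ has no carry in base $p$ (because $d_j\le (p-1)/2$), each $\binom{q-1-2k'}{mk'}$ becomes a product $\prod_j\binom{p-1-2d_j}{md_j}$ over digits. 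Whenever a maximal digit $d_m=(p-1)/2$ appears, the factor $\binom{0}{(p-1)/2}=0$ kills both products and again forces the impossible $6\equiv 0\pmod p$; for smaller digit values $2\le d_m<(p-1)/2$ (relevant only when $p\ge 7$), analogous vanishings arise from iterating the argument with $s=(jb)^*$ for $j\ge 3$, combined with the cyclic-shift invariance of Fact~\ref{F3.1}, which lets us place any putative bad digit at position $0$. The main obstacle is making this iteration exhaustive: for each forbidden digit value $d\in\{2,\dots,(p-1)/2\}$ one must identify a value of $j$ whose associated identity collapses to a contradiction when $d$ appears as a digit of $k'$, and the wraparound in $(ki)^*$ across the different ranges of $k'$ must be tracked carefully throughout.
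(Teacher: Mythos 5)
Your $j=1$ computation is correct: taking $s=b$ in \eqref{2.4} (with $bk\equiv-1\pmod{q-1}$) does force $k'<(q-1)/2$ and yields $\binom{q-1-k'}{k'}\equiv 2(-1)^{k'}\pmod p$, whence by Lucas every base-$p$ digit $d_j$ of $k'$ satisfies $\binom{p-1-d_j}{d_j}\not\equiv 0$, i.e.\ $d_j\le(p-1)/2$. That settles the lemma for $p=3$, but for $p\ge 5$ it proves strictly less than the statement, and everything after that point is a plan rather than a proof. The $j=2$ identity is asserted without verification (the wraparound analysis of $(lb)^*$ versus $s=(2b)^*$ already branches into several subcases there), and for the digits $d\in\{2,\dots,(p-1)/2\}$ you concede yourself that "the main obstacle is making this iteration exhaustive" --- no value of $j$ is actually exhibited that kills a given digit value $d$, and no argument is given that such a $j$ always exists. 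As written, the proposal therefore has a genuine gap: it establishes $d_j\le(p-1)/2$ and nothing more.

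The ingredient you are missing is the one the paper uses: Lemma~\ref{L3.2} gives $\binom{2c}{c}\equiv 0\pmod p$ for $c=\lfloor(q-1)/k'\rfloor$, which forces a carry in $c+c$ and hence $c\ge(p+1)/2>(p-1)/2$. Writing $k'=k_0'+\cdots+k_{e-1}'p^{e-1}$, the inequality $c\le(q-1)/k'\le(p^e-1)/(k_{e-1}'p^{e-1})$ then gives $k_{e-1}'c\le p-1$, so $k_{e-1}'\le(p-1)/c<2$; Fact~\ref{F3.1} (replacing $k'$ by $(p^{e-1-i}k')^*$) transfers this to every digit. In other words, the paper bounds digits of $k'$ through a largeness statement about the \emph{quotient} $c$ rather than through further power-sum identities in $s=(jb)^*$; if you want to salvage your approach for $p\ge 5$, you would either need to carry out the full induction on $j$ (with all the $(ki)^*$ wraparound cases) or import \eqref{3.7} and the size argument above.
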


\begin{proof}
We only have to consider the case when $k$ is not a power of $p$. By \eqref{3.7}, we have $c>(p-1)/2$. Write $k'=k_0'p^0+\cdots+k_{e-1}'p^{e-1}$, where $0\le k_i'\le p-1$. Since
\[
c\le\frac{q-1}{k'}\le\frac{p^e-1}{k_{e-1}'p^{e-1}},
\]
we have
\[
k_{e-1}'c\le p-\frac 1{p^{e-1}},
\]
and hence $k_{e-1}'c\le p-1$. It follows that $k_{e-1}'\le(p-1)/c<2$. Replacing $k'$ with $(p^{e-1-i}k')^*$ (and $k$ with $(p^{1+i}k)^*$), we also have $k_i'<2$.
\end{proof}

\begin{lem}\label{L3.4}
Assume that $q$ is odd and $B_k$ is a PP of $\f_q$. Then 
\[
(-2)^{k-1}\equiv 1\pmod p.
\]
\end{lem}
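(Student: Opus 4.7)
The strategy is a Wilson-type product identity exploiting the two fixed points $0$ and $-2$ of $B_k$. First I would verify that $B_k(-2) = -2$: by the proof of Theorem~\ref{T2.3}(ii), step $1^\circ$, the PP assumption forces $\gcd(k,(q-1)/2) = 1$; since $q-1$ is even, this yields $\gcd(2k,q-1) = 2$, so $(x+1)^{2k}=1$ in $\f_q$ has exactly the two solutions $x = 0$ and $x = -2$, and substituting $x = -2$ into $B_k$ (using $(-2)^{q-1}=1$) gives $B_k(-2) = -2$. Together with $B_k(0) = 0$, this shows that $B_k$ restricts to a bijection of $\f_q^* \setminus \{-2\}$ onto itself.

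Next I would evaluate $P := \prod_{x \in \f_q^* \setminus \{-2\}}(B_k(x)+2)$ in two ways. Via the bijection and Wilson's theorem,
\[
P = \prod_{y \in \f_q^* \setminus \{-2\}} (y+2) = \prod_{z \in \f_q^* \setminus \{2\}} z = -\tfrac{1}{2}.
\]
Via the identity $B_k(x)+2 = [(x+1)^{2k}-1]\,x^{-k}$ valid on $\f_q^*$ together with the substitution $u = x+1$,
\[
P = \Bigl(\prod_{x \in \f_q^* \setminus \{-2\}} x\Bigr)^{-k} \cdot \prod_{u \in \f_q \setminus \{\pm 1\}}(u^{2k}-1).
\]
The first factor equals $2^k$ since $\prod_{x \in \f_q^* \setminus \{-2\}} x = -1/(-2) = 1/2$. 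In the second factor the $u=0$ term contributes $-1$, and for $u \in \f_q^* \setminus \{\pm 1\}$ the map $u \mapsto u^{2k}$ is $2$-to-$1$ onto $(\f_q^*)^2 \setminus \{1\}$ (using $\gcd(2k,q-1) = 2$), so the remaining sub-product equals the square of $\prod_{v \in (\f_q^*)^2 \setminus \{1\}}(v-1)$, namely $((q-1)/2)^2 \equiv 1/4 \pmod p$ (obtained by differentiating $X^{(q-1)/2}-1$ at $X=1$). Hence $P = 2^k \cdot (-1/4) = -2^{k-2}$, and equating the two expressions yields $2^{k-1} \equiv 1 \pmod p$.

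Since $(-2)^{k-1} = (-1)^{k-1} \cdot 2^{k-1}$, the stated conclusion $(-2)^{k-1}\equiv 1\pmod p$ reduces to verifying $(-1)^{k-1} = 1$, i.e., that $k$ is odd. This parity step is the main obstacle: the Wilson-type product above is inherently insensitive to the sign change $-2 \leftrightarrow 2$, so an additional ingredient is needed to eliminate the residual sub-case where $k$ is even (which forces $q \equiv 3 \pmod 4$). I expect this to be handled by combining the derived $2^{k-1}\equiv 1 \pmod p$ with further Hermite constraints from~\eqref{2.5} (e.g., $\binom{2k}{k} \equiv 2 \pmod p$ from $s=1$) in order to rule out even candidates $k$ by contradiction, thereby closing the proof.
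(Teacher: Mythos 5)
Your strategy is the same as the paper's: both arguments observe that $f:=B_k+2=[(X+1)^{2k}-1]X^{-k}$ on $\f_q^*$, extended by $f(0)=2$, is a bijection of $\f_q$ with $f(-2)=0$, and then evaluate the Wilson product $\prod_x f(x)=-1$ a second way. Your execution of the product is correct as far as it goes (and is in fact more uniform than the paper's, handling both parities of $k$ at once via $\gcd(2k,q-1)=2$). But what you obtain is $2^{k-1}\equiv 1$, which is not the statement of the lemma, and you say so yourself: passing to $(-2)^{k-1}\equiv 1$ requires $k$ to be odd, and you leave that step as an expectation rather than an argument. That is a genuine gap, and not a peripheral one: the oddness of $k$ is exactly what Lemma~\ref{L3.5} establishes \emph{later}, and its argument there relies on the present lemma holding in the even case (for even $k$, $(-2)^{k-1}=1$ means $2^{k-1}=-1$, so $2$ has even order, contradicting $2^{(q-1)/2}=1$ from \eqref{2.5} with $s=(q-1)/2$). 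Your proposed substitute ingredient ($\binom{2k}{k}\equiv 2$ from $s=1$) is not shown to rule out even $k$, and it is not clear that it can.

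There is a second point you should confront directly: your computation and the paper's Case~2 genuinely disagree. The paper evaluates $\prod_{u\in\f_q\setminus\{\pm1\}}(u^k+1)(u^k-1)$ separately for odd and even $k$, getting $-1/4$ and $1\cdot\frac14$ respectively, hence $2^{k-1}=1$ for odd $k$ but $2^{k-1}=-1$ for even $k$; either way $(-2)^{k-1}=1$, so the parity never needs to be decided inside this lemma. Your uniform computation gives $2^{k-1}=1$ in both cases. The discrepancy traces to the factor contributed by $u=0$, namely $0^k-1=-1$, which is present in $\prod_{u\in\f_q\setminus\{\pm1\}}(u^k-1)$ but not in the paper's expression $\bigl(\prod_{\alpha\in S\setminus\{1\}}(\alpha-1)\bigr)^2$; for instance, over $\f_7$ with $k=2$ one finds $\prod_{u\in\{0,2,3,4,5\}}(u^2-1)=5=-1/4$, agreeing with your value rather than with $1/4$. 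The upshot is that you cannot expect to close your gap by simply redoing the same product more carefully to match the paper's Case~2: if your evaluation is the correct one, the even case must be excluded by an entirely different argument, and as written your proposal does not supply one.
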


\begin{proof}
We first claim that $k\ne q-1$. If, to the contrary, $k=q-1$, since $\text{gcd}(k,(q-1)/2)=1$ (proof of Theorem~\ref{T2.3}, Case~2, $1^\circ$), we must have $q=3$ and $k=2$. But then $B_k=(X+1)^4-1-2X^2\equiv 2X(X+1)\pmod{X^3-X}$, which is not a PP of $\f_3$.

Since $B_k$ is a PP of $\f_q$, $f:=[(X+1)^{2k}-1]/X^k$ is one-to-one on $\f_q^*$. Since $B_k(0)=0$, we have $f(x)\ne 2$ for all $x\in\f_q^*$. Define $f(0)=2$. Then $f:\f_q\to\f_q$ is a bijection with $f(-2)=0$. Thus
\begin{equation}\label{3.13}
-1=\prod_{x\in\f_q\setminus\{-2\}}f(x)=2\prod_{x\in\f_q\setminus\{0,-2\}}\frac{(x+1)^{2k}-1}{x^k}=2^{k+1}\prod_{x\in\f_q\setminus\{\pm1\}}(x^k+1)(x^k-1).
\end{equation}

{\bf Case 1.}
Assume that $k$ is odd. Since $\text{gcd}(k,(q-1)/2)=1$, we have $\text{gcd}(k,q-1)=1$. Then,
\[
\prod_{x\in\f_q\setminus\{\pm1\}}(x^k+1)=\prod_{y\in\f_q\setminus\{0,2\}}y=-\frac 12,
\]
\[
\prod_{x\in\f_q\setminus\{\pm1\}}(x^k-1)=\prod_{y\in\f_q\setminus\{0,-2\}}y=\frac 12.
\]
Therefore \eqref{3.13} gives
\[
-1=2^{k+1}\Bigl(-\frac 12\Bigr)\frac 12,
\]
that is, $2^{k-1}=1$. 

\medskip

{\bf Case 2.} Assume that $k$ is even. Then $(q-1)/2$ is odd and $\text{gcd}(k,q-1)=2$. 

Let $S$ denote the set of nonzero squares in $\f_q$. We have
\begin{equation}\label{3.14}
\prod_{\alpha\in S}(X-\alpha)=X^{(q-1)/2}-1.
\end{equation}
Setting $X=-1$ in \eqref{3.14} gives $\prod_{\alpha\in S}(\alpha+1)=2$, that is,
\begin{equation}\label{3.15}
\prod_{\alpha\in S\setminus\{1\}}(\alpha+1)=1.
\end{equation}
By \eqref{3.14},
\begin{equation}\label{3.16}
\prod_{\alpha\in S\setminus\{1\}}(X+1-\alpha)=\frac{(X+1)^{(q-1)/2}-1}{X}=\sum_{i=1}^{(q-1)/2}\binom{(q-1)/2}iX^{i-1}.
\end{equation}
Setting $X=0$ in \eqref{3.16} gives
\begin{equation}\label{3.17}
\prod_{\alpha\in S\setminus\{1\}}(\alpha-1)=\frac{q-1}2=-\frac 12.
\end{equation}
By \eqref{3.15} and \eqref{3.17}, 
\[
\prod_{x\in\f_q\setminus\{\pm1\}}(x^k+1)=\prod_{x\in\f_q\setminus\{\pm1\}}(x^2+1)=\Bigl(\prod_{\alpha\in S\setminus\{1\}}(\alpha+1)\Bigr)^2=1,
\]
\[
\prod_{x\in\f_q\setminus\{\pm1\}}(x^k-1)=\Bigl(\prod_{\alpha\in S\setminus\{1\}}(\alpha-1)\Bigr)^2=\frac 14.
\]
Thus \eqref{3.13} becomes $2^{k-1}=-1$.
\end{proof}

\begin{lem}\label{L3.5}
Assume that $q$ is odd, $1<k\le q-1$, and $B_k$ is a PP of $\f_q$. Then $k$ is odd, $a$ and $c$ are even, and
\begin{equation}\label{3.18}
2^{k-1}=1,
\end{equation}
\begin{equation}\label{3.19}
\binom a{\frac a2}=(-1)^{\frac a2}2^a,
\end{equation}
\begin{equation}\label{3.20}
\binom{a-1}{\frac a2}\binom{ka}k=(-1)^{\frac a2-1}2^{a-1},
\end{equation}
\begin{equation}\label{3.21}
\binom b{\frac{q-1}2}=(-1)^{b+\frac{q+1}2}2^b,
\end{equation}
\begin{equation}\label{3.22}
\binom{q-1-ck'}{\frac12(q-1-ck')}=(-1)^{\frac c2+\frac{q-1}2}2^{-ck'},
\end{equation}
\begin{equation}\label{3.23}
(-c+1)\binom{q-1-(c-1)k'}{\frac12[q-1-(c-2)k']}=(-1)^{\frac c2+\frac{q-1}2}2^{-(c-1)k'}.
\end{equation}
\end{lem}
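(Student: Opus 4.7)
I will invoke the necessary condition \eqref{2.5} of Theorem \ref{T2.3}(ii), whose proof at steps $1^\circ$ and $2^\circ$ establishes \eqref{2.5} under $\gcd(k,(q-1)/2)=1$ without circularly requiring Lemma \ref{L3.5}, and feed in carefully chosen values of $s$. The tight inequalities $(q-1)/(a+1)<k\le(q-1)/a$ and $(q-1)/(c+1)<k'\le(q-1)/c$ force $\binom{(2ki)^*}{(ks)^*}$ to vanish for all but one or two indices $i$, collapsing \eqref{2.5} at each $s$ to a single identity. The template mirrors Lemma \ref{L3.2}; the new feature is that the nonzero right-hand side $(-2)^s$ serves a dual role, both supplying the claimed identity and extracting the parity constraints, because a sum forced to be empty cannot equal $(-2)^s\ne 0$.

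\textbf{Identities at $s=a$ and $s=a-1$.} Taking $s=a$, the bounds $ka<q-1<k(a+1)$ ensure $(2ki)^*<ka$ for $i<a/2$ and $(2ki)^*=2ki-(q-1)<ka$ for $i>a/2$, so only $i=a/2$ can contribute. If $a$ were odd no such integer exists and the left side of \eqref{2.5} would vanish against $(-2)^a\ne 0$, a contradiction; hence $a$ is even, and the surviving $i=a/2$ term yields \eqref{3.19}. Setting $s=a-1$ runs the same analysis — the surviving index is again $i=a/2$, since $(a-1)/2$ is not an integer — and after using $\binom{ka}{k(a-1)}=\binom{ka}{k}$ it yields \eqref{3.20}.

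\textbf{Oddness of $k$ and the remaining identities.} Suppose for contradiction that $k$ is even. Then $\gcd(k,(q-1)/2)=1$ forces $(q-1)/2$ to be odd, and Lemma \ref{L3.4} (Case 2) gives $2^{k-1}=-1$, so $\mathrm{ord}_p(2)$ is even. Applying \eqref{2.5} at $s=(q-1)/2$: since $k$ is even, $k(q-1)/2\equiv 0\pmod{q-1}$ and $(ks)^*=q-1$, so $\binom{(2ki)^*}{q-1}\ne 0$ only when $(2ki)^*=q-1$; this isolates $i=(q-1)/2$ and yields $2^{(q-1)/2}=1$, forcing $\mathrm{ord}_p(2)\mid(q-1)/2$ to be odd, which contradicts the previous line. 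Hence $k$ is odd, so $k',b,c$ are defined, and Lemma \ref{L3.4} (Case 1) gives \eqref{3.18}. For \eqref{3.21} take $s=b$: then $(kb)^*=q-2$, so $\binom{(2ki)^*}{q-2}\ne 0$ forces $(2ki)^*\in\{q-2,q-1\}$; a parity argument ($k,k'$ odd, $q-1$ even) eliminates $(2ki)^*=q-2$ and restricts $(2ki)^*=q-1$ to $i=(q-1)/2$, simultaneously forcing $b\ge(q-1)/2$ and yielding \eqref{3.21}. Finally, \eqref{3.22} and \eqref{3.23} are produced by $s=q-1-ck'$ and $s=q-1-(c-1)k'$ respectively, mirroring step $2^\circ$ of Lemma \ref{L3.2}: a surviving-index analysis forces $c$ to be even and extracts the two identities.

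\textbf{Main obstacle.} I expect the subtlest step to be identifying the precise surviving indices $i$ in \eqref{3.22} and \eqref{3.23}. Unlike the $A_k$ case treated in Lemma \ref{L3.2}, here $(2ki)^*$ can equal either $2ki$ or $2ki-(q-1)$, and both branches must be matched against the target $(ks)^*=q-1-c$ (resp.\ $q-c$); carefully bounding the ranges, handling the parities of $k'$ and $c$, and showing exactly one or two indices survive is where the bulk of the technical work lies. The forcing of $k$ to be odd, by contrast, turns out to follow cleanly from juxtaposing $s=(q-1)/2$ with Lemma \ref{L3.4}.
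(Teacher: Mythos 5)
Your proposal is correct and follows essentially the same route as the paper: it derives everything from \eqref{2.5} with the same choices of $s$ (namely $a$, $a-1$, $(q-1)/2$, $b$, $q-1-ck'$ and $q-1-(c-1)k'$), uses the same surviving-index/parity analysis to force $a$ and $c$ even, and handles the oddness of $k$ and \eqref{3.18} by the same juxtaposition of $s=(q-1)/2$ with Lemma~\ref{L3.4}, including the correct observation that steps $1^\circ$--$2^\circ$ of Theorem~\ref{T2.3}(ii) supply \eqref{2.5} non-circularly. The index analysis you flag as the main obstacle for \eqref{3.22}--\eqref{3.23} is carried out in the paper exactly as you predict, by matching $(2ki)^*=q-1-2l$ against the bounds \eqref{3.28} and showing only $i=\tfrac12(q-1)-\tfrac c2 k'$ (resp.\ $i=\tfrac12[q-1-(c-2)k']$) survives.
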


\begin{proof}
$1^\circ$ We first show that $k$ is odd. This will imply \eqref{3.18} through Lemma~\ref{L3.4} and also complete the proof of Theorem~\ref{T2.3}, Case 2, Step $3^\circ$. 
Recall from the proof of Theorem~\ref{T2.3}, Case 2, Step $2^\circ$, that $\text{gcd}(k, (q-1)/2)=1$ and \eqref{2.5} holds.

Assume to the contrary that $k$ is even. Equation \eqref{2.5} with $s=(q-1)/2$ gives
\begin{equation}\label{3.24}
\sum_i(-1)^i\binom{\frac{q-1}2}i\binom{(2ki)^*}{q-1}=(-2)^{\frac{q-1}2}.
\end{equation}
Since $\text{gcd}(2k,q-1)=2$, $(q-1)/2$ is odd. In the above,
\[
\binom{\frac{q-1}2}i\binom{(2ki)^*}{q-1}\ne 0
\]
only if $i=(q-1)/2$. Hence \eqref{3.24} gives $2^{(q-1)/2}=1$. So the order of $2$ in $\f_p^*$ is odd. However, by Lemma~\ref{L3.4}, $2^{k-1}=-1$ has order $2$, which is a contradiction.

\medskip
$2^\circ$ We now prove that $a$ is even and \eqref{3.19} and \eqref{3.20} hold. Since $\text{gcd}(k,(q-1)/2)=1$ and $k$ is odd, we have $\text{gcd}(k,q-1)=1$. Thus \eqref{3.4} becomes
\[
\frac{q-1}{a+1}<k<\frac{q-1}a.
\]
By \eqref{2.5},
\begin{equation}\label{3.25}
\sum_i(-1)^i\binom ai\binom{(2ki)^*}{ka}=(-2)^a.
\end{equation}
In the above, $(2ki)^*\ge ka$ only when $i\ge a/2$. When $a/2<i\le a$, $(2ki)^*=2ki-(q-1)<ka$. Therefore, $a$ must be even and \eqref{3.25} becomes
\[
(-1)^{\frac a2}\binom a{\frac a2}=(-2)^a,
\] 
which is \eqref{3.19}. Also by \eqref{2.5},
\begin{equation}\label{3.26}
\sum_i(-1)^i\binom{a-1}i\binom{(2ki)^*}{k(a-1)}=(-2)^{a-1}.
\end{equation}
In the above, $(2ki)^*\ge k(a-1)$ only when $i\ge(a-1)/2$, i.e., $i\ge a/2$ (since $a$ is even). When $a/2<i\le a-1$, $(2ki)^*=2ki-(q-1)<k(a-1)$. Hence \eqref{3.26} becomes
\[
(-1)^{\frac a2}\binom{a-1}{\frac a2}\binom{ka}{k(a-1)}=(-2)^{a-1},
\]
which is \eqref{3.20}.

\medskip
$3^\circ$ Next, we prove \eqref{3.21}. 
By \eqref{2.5},
\begin{equation}\label{3.27}
(-2)^b=\sum_i(-1)^i\binom bi\binom{(2ki)^*}{(kb)^*}=\sum_i(-1)^i\binom bi\binom{(2ki)^*}{q-2}.
\end{equation}
In the above,
\[
\binom bi\binom{(2ki)^*}{q-2}\ne 0
\]
only if $i=(q-1)/2$. Hence \eqref{3.27} gives
\[
(-2)^b=(-1)^{\frac{q-1}2}\binom b{\frac{q-1}2}\binom{q-1}{q-2}=(-1)^{\frac{q+1}2}\binom b{\frac{q-1}2},
\]
which is \eqref{3.21}.

\medskip
$4^\circ$ Finally, we prove that $c$ is even and \eqref{3.22} and \eqref{3.2} hold.

In \eqref{3.5}, if $k'=(q-1)/c$, since $\text{gcd}(k',q-1)=1$, we must have $k'=1$. Then $k=1$, which is a contradiction. Therefore \eqref{3.5} becomes
\begin{equation}\label{3.28}
\frac{q-1}{c+1}<k'<\frac{q-1}c.
\end{equation}
Let $s=(cb)^*$. Then we have
\begin{gather*}
s=q-1-ck',\\
(ks)^*=q-1-c.
\end{gather*}
By \eqref{2.5},
\begin{equation}\label{3.29}
\sum_i(-1)^i\binom si\binom{(2ki)^*}{q-1-c}=(-2)^s.
\end{equation}
For $0\le l\le c/2$, let $i\in\{0,\dots,q-1\}$ be such that $(2ki)^*=q-1-2l$. By \eqref{3.28},
\[
i=q-1-lk'\quad\text{or}\quad \frac 12(q-1)-lk'.
\]

If $i=q-1-lk'$, then $i>q-1-ck'=s$. 
If $i=\frac 12(q-1)-lk'$, then $i\le s$ only if $l=c/2$. In fact, $\frac 12(q-1)-lk'=i\le s=q-1-ck'$ implies that
\[
k'\le\frac{q-1}{2(c-l)},
\]
which, by \eqref{3.28}, implies that $2(c-l)\le c$, i.e., $l\ge c/2$.

Therefore, the $i$th term of the sum in \eqref{3.29} is nonzero only if $i=\frac 12(q-1)-\frac c2k'$. Hence $c$ must be even and \eqref{3.29} gives 
\[
(-1)^{\frac 12(q-1-ck')}\binom{q-1-ck'}{\frac 12(q-1-ck')}=(-2)^{-ck'},
\]
which is \eqref{3.22}.

To prove \eqref{3.23}, we choose $s=((c-1)b)^*$. We have
\begin{gather*}
s=q-1-(c-1)k',\\
(ks)^*=q-1-(c-1),
\end{gather*}
and \eqref{2.5} gives
\begin{equation}\label{3.30}
\sum_i(-1)^i\binom si\binom{(2ki)^*}{q-1-(c-1)}=(-2)^s.
\end{equation}
For $0\le l\le c/2-1$, let $i\in\{0,\dots,q-1\}$ be such that $(2ki)^*=q-1-2l$. Then
\[
i=q-1-lk'\quad\text{or}\quad \frac 12(q-1)-lk'.
\]

If $i=q-1-lk'$, then $i>s$. If $i=\frac 12(q-1)-lk'$, then $i\le s$ only if $l=c/2-1$. In fact, $i\le s$ implies that
\[
k'\le\frac{q-1}{2(c-1-l)},
\]
which further implies that $2(c-1-l)\le c$, i.e., $l\ge c/2-1$. Therefore, the $i$th term of the sum in \eqref{3.30} is nonzero only if $i=\frac 12[q-1-(c-2)k']$. Hence \eqref{3.30} gives 
\[
\begin{split}
-2^{-(c-1)k'}\,&=(-1)^{\frac c2-1+\frac{q-1}2}\binom{q-1-(c-1)k'}{\frac 12[q-1-(c-2)k']}\binom{q-1-2(\frac c2-1)}{q-1-(c-1)}\cr
&=(-1)^{\frac c2-1+\frac{q-1}2}\binom{q-1-(c-1)k'}{\frac 12[q-1-(c-2)k']}(-c+1),
\end{split}
\]
which is \eqref{3.23}.
\end{proof}

For each odd prime $p$, let
\begin{equation}\label{3.31}
\alpha(p)=\min\Bigl\{u: \text{$u$ is a positive even integer},\ \binom u{u/2}\equiv(-1)^{\frac u2}2^u\pmod p\Bigr\}.
\end{equation}

\begin{rmk}\label{R3.6}\rm 
Since
\[
\binom{p-1}{\frac{p-1}2}\equiv(-1)^{\frac{p-1}2}\pmod p,
\]
we always have $\alpha(p)\le p-1$. 
\end{rmk}

\begin{lem}\label{L3.7}
Assume that $q$ is odd and $1<k\le q-1$. If $B_k$ is a PP of $\f_q$, then all the base $p$ digits of $k$ are $\le(p-1)/\alpha(p)$.
\end{lem}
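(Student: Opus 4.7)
The plan is to reduce the lemma, via Fact~\ref{F3.1}, to bounding the top base $p$ digit of $k$, and then to combine a size estimate for $a=\lfloor(q-1)/k\rfloor$ with the congruence \eqref{3.19} from Lemma~\ref{L3.5} and the definition \eqref{3.31} of $\alpha(p)$.

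First, write $k=k_0+k_1p+\cdots+k_{e-1}p^{e-1}$ with $0\le k_i\le p-1$. A direct calculation using $p^e\equiv 1\pmod{q-1}$ shows that $(p^ik)^*$ has its base $p$ digit sequence obtained from $(k_0,\ldots,k_{e-1})$ by a cyclic shift. By Fact~\ref{F3.1}, $B_{(p^ik)^*}$ is a PP of $\f_q$ for every $i\ge 0$, and moreover $1<(p^ik)^*\le q-1$ for every $i$ unless $k$ is itself a power of $p$. Hence, to prove the lemma, I may choose $i$ so that a prescribed nonzero digit $k_j$ of $k$ lands in the top position; this reduces everything to the following claim: whenever $1<k\le q-1$, $B_k$ is a PP of $\f_q$, and $k_{e-1}\ge 1$, one has $k_{e-1}\le (p-1)/\alpha(p)$.

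For this reduced claim, since $k\ge k_{e-1}p^{e-1}$ I get
\[
a=\Bigl\lfloor\frac{q-1}{k}\Bigr\rfloor\le\frac{p^e-1}{k_{e-1}p^{e-1}}<\frac{p}{k_{e-1}},
\]
so the positive integer $k_{e-1}a$ is strictly less than $p$, forcing $k_{e-1}a\le p-1$. On the other hand, Lemma~\ref{L3.5} tells me that $a$ is a positive even integer satisfying $\binom{a}{a/2}\equiv(-1)^{a/2}2^a\pmod p$, so by the definition of $\alpha(p)$ in \eqref{3.31} we have $a\ge\alpha(p)$. Combining, $k_{e-1}\le(p-1)/a\le(p-1)/\alpha(p)$, as desired.

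The only loose end is the edge case where $k$ is a power of $p$, for then some cyclic shift $(p^ik)^*$ equals $1$ and falls outside the hypothesis $1<k$ of Lemma~\ref{L3.5}. In that case, however, every base $p$ digit of $k$ is $0$ or $1$, and since $\alpha(p)\le p-1$ by Remark~\ref{R3.6}, the bound $k_j\le(p-1)/\alpha(p)$ holds trivially. I do not anticipate any real obstacle beyond this bookkeeping; the substance is just pairing the elementary inequality $k_{e-1}a<p$ with the minimality built into the definition of $\alpha(p)$.
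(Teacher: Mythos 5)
Your proof is correct and follows essentially the same route as the paper's: both use $k\ge k_{e-1}p^{e-1}$ to get $k_{e-1}a\le p-1$, invoke \eqref{3.19} and the definition \eqref{3.31} to get $a\ge\alpha(p)$, and then cycle the digits via Fact~\ref{F3.1}. Your explicit treatment of the power-of-$p$ edge case (where a shift of $k$ would violate the hypothesis $1<k$) is a small piece of bookkeeping the paper leaves implicit, and it is handled correctly.
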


\begin{proof}
By \eqref{3.19}, $a=\lfloor(q-1)/k\rfloor\ge\alpha(p)$. Let $q=p^e$ and write $k=k_0p^0+\cdots+k_{e-1}p^{e-1}$, where $0\le k_i\le p-1$. We first show that $k_{e-1}\le(p-1)/\alpha(p)$. Assume that $k_{e-1}>0$. Since
\[
a\le \frac{q-1}k\le \frac{p^e-1}{k_{e-1}p^{e-1}},
\]
we have 
\[
k_{e-1}a\le p-\frac1{p^{e-1}}.
\]
Thus $k_{e-1}a\le p-1$, and hence $k_{e-1}\le(p-1)/a\le(p-1)/\alpha(p)$.

Replacing $k$ with $(p^{e-1-i}k)^*$, we conclude that $k_i\le(p-1)/\alpha(p)$.
\end{proof}

We include a quick proof for Theorem~\ref{T1.4}.

\begin{proof}[Proof of Theorem~\ref{T1.4}] Let $q=p^e$.
Assume that $1<k\le q^m-1$ and $B_k$ is a PP of $\f_{q^m}$. Write $k=k_0q^0+\cdots+k_{m-1}q^{m-1}$, $0\le k_i\le q-1$. By Lemma~\ref{L3.7}, all the base $p$ digits of $k$ are $\le \lfloor(p-1)/\alpha(p)\rfloor$. Hence
\[
k_i\le\Bigl\lfloor\frac{p-1}{\alpha(p)}\Bigr\rfloor\frac{q-1}{p-1},\qquad 0\le i\le m-1.
\]
Since Conjecture~B is assumed to be true for $q$, by Fact~\ref{F3.1}, we may assume that $k\equiv 1\pmod{q-1}$, that is,
\[
k_0+\cdots+k_{m-1}\equiv 1\pmod{q-1}.
\]
However, 
\[
k_0+\cdots+k_{m-1}\le m\Bigl\lfloor\frac{p-1}{\alpha(p)}\Bigr\rfloor\frac{q-1}{p-1}\le q-1.
\]
So we must have $k_0+\cdots+k_{m-1}=1$.
\end{proof}

%%%%%%%%%%%%%%%%%%%%%%%%%%%%%%%%%%%%
\section{A Theorem on Conjecture~A}

\begin{thm}\label{T4.1}
Conjecture~A is true for $q=p^e$, where $p$ is an odd prime and $\text{\rm gpf}(e)\le p-1$.
\end{thm}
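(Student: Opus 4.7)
The plan is to proceed by strong induction on $e$, with Corollary~\ref{C3.3} as the base case $e = 1$. For the inductive step, fix $e \ge 2$ with $\text{\rm gpf}(e) \le p - 1$, pick any prime factor $r$ of $e$ (necessarily $r \le p - 1$), and set $e' = e/r$. Assume $A_k$ is a PP of $\f_q$ with $1 < k \le q - 1$; I will show that $k$ must be a power of $p$.

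The first step restricts to the subfield $\f_{p^{e'}} \subseteq \f_q$. Because $x \mapsto x^k$ stabilizes this subfield, $A_k$ restricts to a self-map of $\f_{p^{e'}}$, which is then a permutation. Letting $\bar k \in \{0, 1, \ldots, p^{e'} - 2\}$ be the reduction of $k$ modulo $p^{e'} - 1$, one has $A_k|_{\f_{p^{e'}}} = A_{\bar k}|_{\f_{p^{e'}}}$ as functions on $\f_{p^{e'}}$. A direct check shows that $\bar k = 0$ would make this restriction identically zero off the single point $x = -1$, violating injectivity, so $\bar k \in \{1, \ldots, p^{e'} - 2\}$. Since $\text{\rm gpf}(e') \le \text{\rm gpf}(e) \le p - 1$, the inductive hypothesis applies and yields $\bar k \equiv p^j \pmod{p^{e'} - 1}$ for some $j \ge 0$. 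By Fact~\ref{F3.1}, I may then replace $k$ by the element of its $p$-cyclotomic coset modulo $q-1$ for which $k \equiv 1 \pmod{p^{e'} - 1}$, and correspondingly $k' \equiv 1 \pmod{p^{e'} - 1}$.

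Now invoke Lemma~\ref{L3.3.1}: write $k' = \sum_{i \in S} p^i$ for some $S \subseteq \{0, 1, \ldots, e - 1\}$. Grouping the indices of $S$ by residue modulo $e'$ via $n_j := |\{i \in S : i \equiv j \pmod{e'}\}|$ gives $k' \equiv \sum_{j=0}^{e'-1} n_j p^j \pmod{p^{e'} - 1}$. Each residue class modulo $e'$ inside $\{0, \ldots, e - 1\}$ has exactly $r$ representatives, so $0 \le n_j \le r \le p - 1$. Therefore $\sum_j n_j p^j$ is a genuine base-$p$ expansion of at most $e'$ digits and lies in the interval $[0, p^{e'} - 1]$.

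The conclusion is then immediate: the only integer in $[0, p^{e'} - 1]$ congruent to $1$ modulo $p^{e'} - 1$ is $1$ itself, so $\sum_j n_j p^j = 1$, forcing $n_0 = 1$ and $n_j = 0$ for $j \ge 1$. Thus $|S| = 1$, i.e., $k'$ (and hence $k$) is a power of $p$. The decisive step---and the sole place where the hypothesis $\text{\rm gpf}(e) \le p - 1$ is used---is the bound $n_j \le r \le p - 1$, which keeps $\sum_j n_j p^j$ strictly below $p^{e'}$ so no wrap-around modulo $p^{e'} - 1$ can occur. This pinning of the digit pattern of $k'$ modulo $p^{e'}-1$ is the main technical hurdle: if $r$ were permitted to reach $p$, the integer $\sum_j n_j p^j$ could cross $p^{e'}$ and pick up extra congruent values, and the argument would seemingly require substantially new input beyond Lemma~\ref{L3.3.1}.
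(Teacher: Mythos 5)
Your proof is correct and takes essentially the same route as the paper: the paper's Lemma~\ref{L4.2} is exactly your inductive step (restrict to the subfield $\f_{p^{e/r}}$, use Conjecture~A there together with Fact~\ref{F3.1} to normalize $k'\equiv 1\pmod{p^{e/r}-1}$, then use Lemma~\ref{L3.3.1} to bound the reduction of $k'$ modulo $p^{e/r}-1$ by $p^{e/r}-1$), and your quantity $\sum_j n_j p^j$ coincides with the paper's base-$p^{e/r}$ digit sum $k_0'+\cdots+k_{m-1}'$. No gaps.
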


Theorem~\ref{T4.1} is an immediate consequence of Corollary~\ref{C3.3} and the following lemma.

\begin{lem}\label{L4.2}
Let $q$ be a power of an odd prime $p$ and $1\le m\le p-1$. If Conjecture~A is true for $q$, it is also true for $q^m$.
\end{lem}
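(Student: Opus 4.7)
The plan is to combine two restrictions on $k$, one coming from the hypothesis (Conjecture~A for $\f_q$) and one from Lemma~\ref{L3.3.1} applied inside $\f_{q^m}$, and then finish with a base-$q$ digit count that uses the hypothesis $m\le p-1$ in a critical way.

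Let $1\le k\le q^m-1$ and suppose $A_k$ is a PP of $\f_{q^m}$. First I would restrict to $\f_q$: for every $x\in\f_q$ one checks directly that $A_k(x)=A_j(x)$, where $j=k\bmod(q-1)\in\{0,1,\dots,q-2\}$ (the points $x=0$ and $x=-1$ give the same value for $A_k$ and $A_j$, using that $q-1$ is even so $(-1)^k=(-1)^j$). Thus $A_k|_{\f_q}$ is injective, hence a PP of $\f_q$; if $j=0$ it would vanish on $\f_q^*\setminus\{-1\}$, so in fact $j\in\{1,\dots,q-2\}$ and $A_j$ is a PP of $\f_q$. The hypothesis then forces $j=p^{i_0}$ for some $0\le i_0\le e-1$.

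Using Fact~\ref{F3.1}, replace $k$ by $(p^{e-i_0}k)^*\in\{1,\dots,q^m-1\}$; this lies in the same $p$-cyclotomic coset modulo $q^m-1$, $A_k$ is still a PP of $\f_{q^m}$, and the new $k$ satisfies $k\equiv p^{e-i_0}\cdot p^{i_0}=p^e\equiv 1\pmod{q-1}$. If this new $k$ is already a power of $p$ we are done; otherwise Lemma~\ref{L3.3.1}, applied with $\f_{q^m}$ in the role of $\f_q$, guarantees that every base-$p$ digit of $k'$ lies in $\{0,1\}$, where $k'\in\{1,\dots,q^m-1\}$ satisfies $kk'\equiv 1\pmod{q^m-1}$.

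The finishing step is the digit count. Since $q-1$ divides $q^m-1$ and $k\equiv 1\pmod{q-1}$, we get $k'\equiv 1\pmod{q-1}$. Write $k'=\sum_{i=0}^{m-1}k'_iq^i$ with $0\le k'_i\le q-1$. Since all base-$p$ digits of $k'$ are in $\{0,1\}$, each $k'_i\le 1+p+\cdots+p^{e-1}=(q-1)/(p-1)$. Therefore
\[
\sum_{i=0}^{m-1}k'_i\equiv k'\equiv 1\pmod{q-1},\qquad 1\le\sum_{i=0}^{m-1}k'_i\le m\cdot\frac{q-1}{p-1}\le q-1,
\]
the last inequality being exactly the hypothesis $m\le p-1$. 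These two bounds force $\sum_i k'_i=1$, so $k'=q^{i_1}$ for some $0\le i_1\le m-1$, and then $k\equiv q^{m-i_1}\pmod{q^m-1}$ is a power of $p$. Undoing the $p$-cyclotomic shift, the original $k$ is also a power of $p$, as required.

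The main obstacle is conceptual rather than computational: one must notice that Lemma~\ref{L3.3.1} constrains $k'$ (not $k$), and that after using Conjecture~A for $q$ to normalize $k$ to $\equiv 1\pmod{q-1}$, the base-$q$ digits of $k'$ become tight enough that the inequality $m(q-1)/(p-1)\le q-1$, i.e.\ exactly $m\le p-1$, collapses their sum to a single $1$. Pairing these two ingredients is the whole idea; the calculation itself is short.
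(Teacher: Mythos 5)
Your proof is correct and follows essentially the same route as the paper's: restrict to $\f_q$ and use Conjecture~A for $q$ to normalize $k$ (hence $k'$) to be $\equiv 1\pmod{q-1}$, invoke Lemma~\ref{L3.3.1} to bound the base-$p$ digits of $k'$ by $1$, and then the base-$q$ digit sum of $k'$ is at most $m(q-1)/(p-1)\le q-1$ while being $\equiv 1\pmod{q-1}$, forcing it to equal $1$. You merely spell out details the paper leaves implicit (why $A_k$ restricted to $\f_q$ agrees with $A_j$ for $j=k\bmod(q-1)$, and the normalization via Fact~\ref{F3.1}).
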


\begin{proof}
Assume that $A_k$ is a PP of $\f_{q^m}$, where $1\le k\le q^m-2$. Let $k'\in\{1,\dots,q^m-2\}$ be such that $k'k\equiv 1\pmod{q^m-1}$. It suffices to show that $k'$ is a power of $p$. Write $k'=k_0'q^0+\cdots+k_{m-1}'q^{m-1}$, $0\le k_i'\le q-1$. Since $A_k$ is a PP of $\f_q$ and since Conjecture~A is true for $q$, we may assume that $k'\equiv 1\pmod{q-1}$, that is, 
\begin{equation}\label{4.1}
k_0'+\cdots+k_{m-1}'\equiv 1\pmod{q-1}.
\end{equation}
On the other hand, by Lemma~\ref{L3.3.1}, all base $p$ digits of $k'$ are $\le 1$. Hence
\[
k_i'\le\frac{q-1}{p-1},\qquad 0\le i\le m-1.
\]
Therefore,
\begin{equation}\label{4.2}
k_0'+\cdots+k_{m-1}'\le\frac{q-1}{p-1}m\le q-1.
\end{equation}
Combining \eqref{4.1} and \eqref{4.2} gives $k_0'+\cdots+k_{m-1}'=1$.
\end{proof}

\begin{rmk}\label{R4.3}\rm 
In \cite{Kronenthal-FFA-2012}, the author commented that an avenue to improve Theorem~\ref{T1.2} is to find a more explicit form for the function $p_0$ in that theorem. By Theorem~\ref{T4.1}, one can choose $p_0(r)=r+1$.
\end{rmk} 

%%%%%%%%%%%%%%%%%%%%%%%%%%%%%%%%%%%%%%%
\section{Conjecture~B with $\alpha(p)>(p-1)/2$}

Our proof of Conjecture~B under the condition $\alpha(p)>(p-1)/2$ follows a simple line of logic. Assume to the contrary that $B_k$ is a PP of $\f_{p^e}$ for some $k\in\{1,\dots,p^e-1\}$ which is not a power of $p$. Then with the help of Lemma~\ref{L5.1}, $a:=\lfloor(p^e-1)/k\rfloor\equiv 0\pmod p$. However, \eqref{3.20} dictates that $a\not\equiv 0\pmod p$, hence a contradiction. 

\begin{lem}\label{L5.1}
Let $p\ge 3$ be a prime.
Let $i$, $j$, $e$ be integers such that $0<i<j\le e-1$, and let
\begin{align*}
k&=k_0p^0+\cdots+k_{i-1}p^{i-1}+p^i+p^j,\quad k_0,\dots,k_{i-1}\in\{0,\dots,p-1\},\\
a&=\Bigl\lfloor\frac{p^e-1}k\Bigr\rfloor,\\
u&=\Bigl\lfloor\frac{e-j}{j-i}\Bigr\rfloor.
\end{align*}
Assume that $a$ is even and 
\begin{equation}\label{5.1}
\frac{p^e-1}{p^i+p^j}-\frac{p^e-1}k\le 1.
\end{equation}
Then 
\begin{equation}\label{5.2}
a=
\begin{cases}
p^{e-j}\bigl[1-p^{i-j}+\cdots+(-1)^{u(i-j)}\big]&\text{if $u$ is odd},\vspace{1mm}\cr
p^{e-j}\bigl[1-p^{i-j}+\cdots+(-1)^{u(i-j)}\big]-1&\text{if $u$ is even}.
\end{cases}
\end{equation}
\end{lem}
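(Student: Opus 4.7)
The plan is to compute $a_0 := \lfloor(p^e-1)/(p^i+p^j)\rfloor$ in closed form, argue via hypothesis~\eqref{5.1} that $a_0 - 1 \le a \le a_0$, and then invoke the parity assumption on $a$ to conclude $a = a_0$, matching~\eqref{5.2}. The tail $k_0 + k_1 p + \cdots + k_{i-1} p^{i-1}$ appearing in $k$ should turn out to be irrelevant to the final value of $a$ — this is exactly what hypothesis~\eqref{5.1} buys us.

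Set $d := j - i$ and $S_u := \sum_{l=0}^u (-1)^l p^{e-j-ld}$. The definition $u = \lfloor(e-j)/d\rfloor$ gives $ud \le e - j < (u+1)d$, so $S_u$ is a positive integer and $i \le e - (u+1)d < j$. Writing $p^i + p^j = p^i(1 + p^d)$ and applying the elementary identity
\[
(1+x)\bigl(1 - x + x^2 - \cdots + (-1)^u x^u\bigr) = 1 + (-1)^u x^{u+1}
\]
with $x = p^d$, a short manipulation yields
\[
(p^i + p^j)\,S_u \;=\; p^e + (-1)^u p^{e-(u+1)d}.
\]
I would then split on the parity of $u$. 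If $u$ is odd, $(p^e - 1) - (p^i + p^j)S_u = p^{e-(u+1)d} - 1$, which lies in $[0, p^j) \subset [0, p^i + p^j)$, so the division algorithm gives $a_0 = S_u$. If $u$ is even, $S_u$ overshoots by one; subtracting one and recomputing gives $(p^e - 1) - (p^i+p^j)(S_u - 1) = p^i + p^j - 1 - p^{e-(u+1)d}$, which also lies in $[0,p^i+p^j)$ because $e-(u+1)d \le j-1$. Hence $a_0 = S_u - 1$. These two values are precisely the right-hand side of~\eqref{5.2}.

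To transfer the formula from $a_0$ to $a$, note that $k \ge p^i + p^j$ forces $a \le a_0$, while writing $R_0 = (p^e-1)/(p^i+p^j)$ and $R = (p^e-1)/k$, the identity $R_0 - R = (a_0 - a) + (\{R_0\} - \{R\})$ together with $|\{R_0\} - \{R\}| < 1$ and~\eqref{5.1} forces $a \in \{a_0, a_0 - 1\}$. Since $p$ is odd, each $p^{e-j-ld}$ is odd, so $S_u \equiv u + 1 \pmod 2$; a quick check then shows $a_0$ is even in both parity cases (and therefore $a_0 - 1$ is odd). The hypothesis that $a$ is even excludes $a = a_0 - 1$, and we conclude $a = a_0$. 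The one piece of bookkeeping that needs care is verifying the nested interval $i \le e - (u+1)d < j$ and the resulting non-negativity of the remainders in the two parity cases; beyond that, the argument is a direct unwinding of the geometric identity together with a parity count.
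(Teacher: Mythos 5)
Your proposal is correct and follows essentially the same route as the paper: compute $a_0=\lfloor(p^e-1)/(p^i+p^j)\rfloor$ in closed form, observe that $a_0$ is even because $p$ is odd, and use hypothesis \eqref{5.1} together with $k\ge p^i+p^j$ and the parity of $a$ to force $a=a_0$. The only cosmetic difference is that you evaluate the floor via the finite telescoping identity and the division algorithm, whereas the paper expands $1/(1+p^{i-j})$ as a geometric series and bounds the fractional remainder; the two computations are equivalent.
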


\begin{proof}
Write $e-j=u(i-j)+r$, $0\le r<j-i$. We have
\begin{align*}
\frac{p^e-1}{p^i+p^j}=\,&p^{e-j}\frac 1{1+p^{i-j}}-\frac 1{p^i+p^j}\\
=\,&p^{e-j}\bigl[1-p^{i-j}+p^{2(i-j)}-\cdots\bigr]-\frac 1{p^i+p^j}\\
=\,&p^{e-j}\bigl[1-p^{i-j}+\cdots+(-1)^u p^{u(i-j)}\bigr]\\
&+(-1)^{u+1}p^{r+i-j}\bigl[1-p^{i-j}+p^{2(i-j)}-\cdots\bigr]-\frac 1{p^i+p^j}\\
=\,&p^{e-j}\bigl[1-p^{i-j}+\cdots+(-1)^u p^{u(i-j)}\bigr]+(-1)^{u+1}p^{r+i-j}\frac 1{1+p^{i-j}}-\frac 1{p^i+p^j}\\
=\,&p^{e-j}\bigl[1-p^{i-j}+\cdots+(-1)^u p^{u(i-j)}\bigr]+\frac 1{p^i+p^j}\bigl[(-1)^{u+1}p^{r+i}-1\bigr].
\end{align*}
Since $r+i<j$, we have
\begin{align*}
0<\frac 1{p^i+p^j}\bigl[(-1)^{u+1}p^{r+i}-1\bigr]<1\quad&\text{if $u$ is odd},\\
-1<\frac 1{p^i+p^j}\bigl[(-1)^{u+1}p^{r+i}-1\bigr]<0\quad&\text{if $u$ is even}.
\end{align*}
Thus
\begin{equation}\label{5.3}
\Bigl\lfloor\frac{p^e-1}{p^i+p^j}\Bigr\rfloor=
\begin{cases}
p^{e-j}\bigl[1-p^{i-j}+\cdots+(-1)^u p^{u(i-j)}\bigr]&\text{if $u$ is odd},\vspace{1mm}\cr
p^{e-j}\bigl[1-p^{i-j}+\cdots+(-1)^u p^{u(i-j)}\bigr]-1&\text{if $u$ is even}.
\end{cases}
\end{equation}
Note that the right side of \eqref{5.3} is always even. Then \eqref{5.2} follows from \eqref{5.1}, \eqref{5.3} and the assumption that $a$ is even.
\end{proof}

\begin{thm}\label{T5.2}
Conjecture~B is true for $q=p^e$, where $p$ is an odd prime such that $\alpha(p)>(p-1)/2$.
\end{thm}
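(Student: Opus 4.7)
My plan follows the strategy sketched in the paragraph preceding Lemma~\ref{L5.1}: argue by contradiction, assuming $B_k$ is a PP of $\f_{p^e}$ for some $k\in\{1,\dots,p^e-1\}$ that is not a power of $p$, and force $a:=\lfloor(p^e-1)/k\rfloor\equiv 0\pmod p$ in order to contradict equation~\eqref{3.20}. First, since $\alpha(p)>(p-1)/2$ gives $(p-1)/\alpha(p)<2$, Lemma~\ref{L3.7} forces every base-$p$ digit of $k$ to lie in $\{0,1\}$, and Fact~\ref{F3.1} lets me cyclically rotate these digits freely by replacing $k$ with $(p^lk)^*$.

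If $k$ has exactly two $1$-digits, a shift puts it in the form $k=1+p^j$, which is even (since $p$ is odd), so $\gcd(k,p^e-1)\ge 2$ contradicts $\gcd(k,q-1)=1$ from Theorem~\ref{T2.3}(ii). So $k$ has at least three $1$-digits. I then shift so that $k_0=1$ (hence $p\nmid k$) and two more $1$'s remain at positions $\ge 1$. Letting $j$ and $i$ denote the highest and second-highest $1$-digit positions gives $1\le i<j\le e-1$, and $k$ sits in the exact form $k_0+k_1p+\cdots+k_{i-1}p^{i-1}+p^i+p^j$ required by Lemma~\ref{L5.1}. Lemma~\ref{L3.5} supplies the parity hypothesis that $a$ is even; picking the shift so that $j$ is close to $e-1$ keeps the excess $k-(p^i+p^j)\le(p^i-1)/(p-1)$ small enough relative to $p^i+p^j$ to secure the inequality~\eqref{5.1}.

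Applying Lemma~\ref{L5.1} gives the explicit sum-of-powers formula~\eqref{5.2} for $a$. A parity analysis of $u=\lfloor(e-j)/(j-i)\rfloor$ and $r=(e-j)\bmod(j-i)$ shows that $a\equiv 0\pmod p$ exactly when either $r>0$ with $u$ odd or $r=0$ with $u$ even; translating the pair $(j-i,\,e-j)$ into consecutive cyclic gaps between the $1$-digit positions of $k$, one argues that some rotation of $k$ always lands in one of these two favorable configurations, the lone obstruction being the all-equal-cyclic-gaps case, where $k=(p^e-1)/(p^d-1)$ for a proper divisor $d$ of $e$ and $\gcd(k,q-1)=k>1$ again rules out $B_k$ being a PP. With $p\mid a$ and $k_0=1$ in hand, write $a=pa'$: the last base-$p$ digit of $ka$ is then $0$ while the last base-$p$ digit of $k$ is $1$, so Lucas' theorem gives $\binom{ka}{k}\equiv 0\pmod p$. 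The left-hand side of~\eqref{3.20} thus vanishes mod $p$, while its right-hand side $(-1)^{a/2-1}2^{a-1}$ is a unit mod $p$, the sought contradiction. The main obstacle I foresee is the combinatorial step that guarantees some cyclic shift of $k$ simultaneously validates~\eqref{5.1} and the $(u,r)$-parity condition forcing $a\equiv 0\pmod p$, and the verification that the degenerate all-equal-gaps case is indeed caught by the $\gcd$ argument.
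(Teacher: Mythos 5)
Your skeleton coincides with the paper's: Lemma~\ref{L3.7} forces all base-$p$ digits of $k$ into $\{0,1\}$, Lemma~\ref{L3.5} and \eqref{3.20} give that $a$ is even and $\binom{ka}{k}\not\equiv0\pmod p$ (hence $p\nmid a$), and Lemma~\ref{L5.1} is to produce a rotation of the digit string with $p\mid a$, a contradiction. But the entire content of the proof lies in the step you explicitly defer: producing a cyclic rotation for which \emph{both} the hypothesis \eqref{5.1} holds \emph{and} the pair $(u,r)$ falls in one of your two favorable classes. Your concrete recipe --- rotate so that $k_0=1$ and take $i<j$ to be the two highest $1$-positions --- fails on admissible digit patterns: for the pattern with $1$'s at positions $0$, $e-2$, $e-1$ (any $e\ge4$) it gives $j-i=e-j=1$, so $u=1$ and $r=0$, which by your own parity analysis yields $a\equiv-1\pmod p$; moreover \eqref{5.1} does hold there, so Lemma~\ref{L5.1} simply returns a non-multiple of $p$ and no contradiction. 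Conversely, insisting that $j$ be near $e-1$ is incompatible with $k_0=1$ in general, and when $j$ is small (e.g.\ the rotation $1+p+p^2$ with $e$ large) the left side of \eqref{5.1} is of order $p^{e+i-2j}\gg1$ and the lemma does not apply. The sentence that ``some rotation always lands in one of these two favorable configurations'' is precisely the theorem-shaped hole; the equal-gaps obstruction you identify is real but is not the only thing standing between you and that claim.

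The paper fills the hole with a specific construction you would need to reproduce: let $l$ be the minimal cyclic gap between consecutive $1$-digits, pick a consecutive pair realizing $l$ whose \emph{following} gap exceeds $l$ (possible because the gaps are not all equal, the equal-gap case being killed by $\gcd(k,q-1)=1$ as you note), and rotate that pair to positions $i=e-1-2l$, $j=e-1-l$. Then $j-i=l$ and $e-j=l+1$ give $(u,r)=(2,0)$ for $l=1$ and $(1,1)$ for $l\ge2$, both favorable, and the minimality of $l$ bounds the lower digits by $p^i\cdot p^l/(p^l-1)$, which is exactly what makes \eqref{5.1} verifiable in the two cases. Without this (or an equivalent) selection argument your proof is incomplete; your separate disposal of the two-$1$-digit case is correct but unnecessary once the construction is in place.
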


\begin{proof}Assume to the contrary that there exists $k\in\{1,\dots,p^e-1\}$, which is not a power of $p$, such that $B_k$ is a PP of $\f_{p^e}$. Write
\[
k=k_0p^0+\cdots+k_{e-1}p^{e-1},\quad 0\le k_i\le p-1.
\]
Since $\alpha(p)>(p-1)/2$, by Lemma~\ref{L3.7}, $k_i\le 1$ for all $i$. Let 
\[
a=\Bigl\lfloor\frac{p^e-1}k\Bigr\rfloor.
\]
By Lemma~\ref{L3.5}, $a$ is even, and by \eqref{3.20},
\[
\binom{ka}k\not\equiv 0\pmod p.
\]
In particular, $a\not\equiv 0\pmod p$.

Let $d$ be the distance in $\Bbb Z/e\Bbb Z$ defined by
\[
d([x],[y])=\min\{|x-y|,\,e-|x-y|\},\quad x,y\in\{0,\dots,e-1\}.
\]
This is the arc distance with $[0],\dots,[e-1]$ evenly placed on a circle in that order. Let $l$ be the shortest distance between two indices $i,j\in\Bbb Z/e\Bbb Z$ with $k_i=k_j=1$. Then $1\le l<e$. The $1$'s among $k_0,\dots,k_{e-1}$ cannot be evenly spaced. Otherwise, $\text{gcd}(k,p^e-1)=(p^e-1)/(p^l-1)>1$, which is a contradiction. Therefore, we may write
\[
(k_0,\dots,k_{e-1})=(\overset 0*\;\cdots\;*\;\overset i1\;\underbrace{0\;\cdots\;0}_{l-1}\;\overset j1\;\underbrace{0\;\cdots\;\overset{e-1}0}_l),
\]
where $j=e-1-l$, $i=j-l=e-1-2l$. We have
\[
u=\Bigl\lfloor\frac{e-j}{j-i}\Bigr\rfloor= \Bigl\lfloor\frac{l+1}l\Bigr\rfloor=
\begin{cases}
2&\text{if}\ l=1,\cr
1&\text{if}\ l\ge 2.
\end{cases}
\]

{\bf Case 1.}
Assume that $l=1$. Since 
\[
k_0p^0+\cdots+k_ip^i\le p^0+\cdots+p^i=\frac{p^j-1}{p-1}<\frac{p^j}{p-1},
\]
we have
\begin{align*}
\frac{p^e-1}{p^i+p^j}-\frac{p^e-1}k&=(p^e-1)\Bigl[\frac 1{p^i+p^j}-\frac 1{k_0p^0+\cdots+k_ip^i+p^j}\Bigr]\\
&<(p^e-1)\biggl[\frac 1{p^i+p^j}-\frac 1{\frac{p^j}{p-1}+p^j}\biggr]\\
&=(p^e-1)\frac 1{p^j}\Bigl[\frac p{p+1}-\frac{p-1}p\Bigr]\\
&=\frac{p^e-1}{p^jp(p+1)}<p^{e-j-2}=1.
\end{align*}
Thus by \eqref{5.2},
\[
a=p^{e-j}\bigl[1-p^{i-j}+\cdots+(-1)^up^{u(i-j)}\bigr]-1=p^2(1-p^{-1})\equiv 0\pmod p,
\]
which is a contradiction.

\medskip
{\bf Case 2.} Assume that $l\ge 2$. Since the distance between the indices of any two consecutive $1$'s among $k_0,\dots,k_{e-1}$ is $\ge l$, we have
\[
k_0p^0+\cdots+k_ip^i<p^i+p^{i-l}+p^{i-2l}+\cdots=p^i\frac{p^l}{p^l-1}.
\]
Hence
\begin{align*}
\frac{p^e-1}{p^i+p^j}-\frac{p^e-1}k&=(p^e-1)\Bigl[\frac 1{p^i+p^j}-\frac 1{k_0p^0+\cdots+k_ip^i+p^j}\Bigr]\\
&<(p^e-1)\biggl[\frac 1{p^i+p^j}-\frac 1{\frac{p^l}{p^l-1}p^i+p^j}\biggr]\\
&=\frac{p^e-1}{p^l-1}\frac{p^i}{(p^i+p^j)\bigl(\frac{p^l}{p^l-1}p^i+p^j\bigr)}\\
&<\frac{p^{e+i}}{p(p^i+p^j)\bigl(\frac{p^l}{p^l-1}p^i+p^j\bigr)}\\
&<\frac{p^{e+i}}{p^{2j+1}}=p^{e+e-1-2l-2(e-1-l)-1}=1.
\end{align*}
Therefore by \eqref{5.2},
\[
a=p^{e-j}\bigl[1-p^{i-j}+\cdots+(-1)^up^{u(i-j)}\bigr]=p^{l+1}(1-p^{-l})\equiv 0\pmod p,
\]
which is a contradiction.
\end{proof}

Many odd primes $p$ satisfy the condition $\alpha(p)>(p-1)/2$. Among the first $1000$ odd primes $p$, the equation $\alpha(p)=p-1$ holds with $211$ exceptions. The first few exceptions are $\alpha(29)=10$, $\alpha(31)=8$, $\alpha(47)=18, \dots$. In fact, for any odd prime $p$, either $\alpha(p)=p-1$ or $\alpha(p)\le(p-1)/2$; this follows from a symmetry described below.

Note that for integer $m\ge 0$,
\[
2^{-2m}\binom{2m}m=\frac{(2m)!}{(2^m\cdot m!)^2}=\frac{(2m-1)!!}{(2m)!!}.
\]
Thus $\alpha(p)$ is the smallest positive even integer $2m$ ($\le p-1$) such that 
\begin{equation}\label{5.4}
\frac{(2m-1)!!}{(2m)!!}\equiv(-1)^m\pmod p.
\end{equation}
Let $0\le m\le(p-1)/2$. Since
\[
\begin{split}
&2^{-2m}\binom{2m}m\Bigl[2^{-(p-1-2m)}\binom{p-1-2m}{\frac{p-1}2-m}\Bigr]^{-1}\cr
=\,&\frac{(2m-1)!!}{(2m)!!}\cdot\frac{(p-1-2m)!!}{(p-2-2m)!!}=\frac{(p-2)!!}{(p-1)!!}=\prod_{\substack{2\le i\le p-1\cr i\,\text{even}}}\frac{p-i}i=(-1)^{\frac{p-1}2},
\end{split}
\]
condition ~\eqref{5.4} is symmetric for $m$ and $(p-1)/2-m$.

For integers $i\le j$, denote $i(i+1)\cdots j$ by $[i,j]$. Then we have
\begin{gather*}
\binom{2m}m=\frac{[m+1,2m]}{[1,m]},\\
\binom{p-1-2m}{\frac{p-1}2-m}=\frac{[\frac{p+1}2-m, p-1-2m]}{[1,\frac{p-1}2-m]}=\frac{[2m+1,\frac{p-1}2+m]}{[\frac{p+1}2+m,p-1]}.
\end{gather*}
Hence 
\[
\begin{split}
\binom{2m}m\binom{p-1-2m}{\frac{p-1}2-m}\,&=\frac{[m+1,m+\frac{p-1}2]}{[1,m][m+\frac{p+1}2,p-1]}=\frac{[m+1,m+\frac{p-1}2]^2}{[1,p-1]}\cr
&=-\Bigl[m+1,m+\frac{p-1}2\Bigr]^2.
\end{split}
\]
Therefore, if \eqref{5.4} is satisfied, one has
\[
\prod_{i=1}^{\frac{p-1}2}(m+i)^2\equiv(-1)^{\frac{p+1}2}\pmod p.
\]

%%%%%%%%%%%%%%%%%%%%%%%%%%%%%%%%%%%
% section 6
%%%%%%%%%%%%%%%%%%%%%%%%%%%%%%%%%%%

\section{Proof of Conjecture~\ref{C1.1}}

We continue to use the notation introduced at the beginning of Section~\ref{S4}. For $1\le k\le q-1$ with $\text{gcd}(k,q-1)=1$, the parameters $k'$, $b$ and $c$ are defined in \eqref{3.2} and \eqref{3.3}.

Assume to the contrary that Conjecture~\ref{C1.1} is false. Then for some $k\in\{1,\dots,q-1\}$ which is not a power of $p$, both $A_k$ and $B_k$ are PPs of $\f_q$. We will see that the same argument as in the proof of Theorem~\ref{T5.2} gives that $c:=\lfloor(q-1)/k'\rfloor\equiv 0\pmod p$. The purpose of the following lemma is to establish an equation that cannot be satisfied when $c\equiv 0\pmod p$.

\begin{lem}\label{L6.1}
Assume that $q$ is odd, $1< k\le q-1$, and both $A_k$ and $B_k$ are PPs of $\f_q$. Then $c$ is even and
\begin{equation}\label{6.1}
2^{-2ck'}=\binom{2(q-1)-2ck'}{q-1-ck'}+(-1)^{\frac{q-1}2+\frac c2+1}\binom{2(q-1)-2ck'}{\frac 12(q-1)-(\frac c2-1)k'}\binom{2c}{c+2}.
\end{equation}
\end{lem}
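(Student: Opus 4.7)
The plan is to apply Theorem~\ref{T2.3}(ii), specifically equation~\eqref{2.5}, with the test exponent $s := (2cb)^* = 2(q-1) - 2ck' = 2s_0$, where $s_0 := q-1-ck'$. Since $c$ is even and $k'$ is odd (the parity of $k'$ follows from $k$ being odd in Lemma~\ref{L3.5} together with $kk' \equiv 1 \pmod{q-1}$), and \eqref{3.28} gives $ck' \in (n, q-1)$ with $n := (q-1)/2$, this $s$ lies in $\{2, \ldots, q-3\}$. A direct computation gives $(ks)^* = (-2c)^* = q - 1 - 2c$, and since $s$ is even and $2^{q-1}=1$ in $\mathbb F_p$, one obtains $(-2)^s = 2^{2s_0} = 2^{-2ck'}$. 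So \eqref{2.5} takes the form
\[
\sum_i(-1)^i\binom{s}{i}\binom{(2ki)^*}{q-1-2c} \;=\; 2^{-2ck'},
\]
and the goal is to collapse the left-hand side onto the right-hand side of \eqref{6.1}.

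The indices $i$ that can potentially contribute are those for which $(2ki)^* = q-1-2m$ for some $m \in \{0, 1, \ldots, c\}$, the factor of $2$ coming from $\gcd(2k, q-1) = 2$. A standard Lucas computation yields the clean congruence $\binom{q-1-2m}{q-1-2c} \equiv \binom{2c}{2m} \pmod p$, while the boundary index $i = 0$ contributes nothing because $(2k \cdot 0)^* = 0$ and $\binom{0}{q-1-2c} = 0$. Solving $2ki \equiv -2m \pmod{q-1}$ produces two solutions in $\{0, \ldots, q-2\}$ for each $m$, and using $2n/(c+1) < k' < 2n/c$, these take the explicit form $\{0, n\}$ when $m=0$, $\{n-mk',\, 2n-mk'\}$ when $1 \le m \le c/2$, and $\{2n-mk',\, 3n-mk'\}$ when $c/2+1 \le m \le c$.

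The combinatorial core is to show that all but three $(m, i)$ pairs give zero. For $1 \le m \le c/2-2$, the bound $(2c-m)k' > 2n(2c-m)/(c+1) > 3n$ forces $n - mk' > s$, so $\binom{s}{n-mk'}=0$ as an integer; the companion index $2n - mk'$ also exceeds $s$ since $m < c$ gives $(2c-m)k' > 2n$. For $c/2+1 \le m \le c-1$, both $2n - mk'$ and $3n - mk'$ exceed $s$ analogously. The pair $(m, i) = (0, n)$ gives $\binom{s}{n}$ which vanishes when $c \ge 4$ (since then $\eta := ck' - n > n/2$), and when $c = 2$ this pair simply coincides with the $m = c/2 - 1 = 0$ contribution identified below. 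Critically, the $m = c/2$ case leaves $i = s_0/2$ as a candidate, but its coefficient $\binom{2c}{c}$ is $0 \pmod p$ by Lemma~\ref{L3.2} (equation \eqref{3.7})---this is precisely where the hypothesis that $A_k$ is a PP enters.

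The three surviving contributions then assemble into \eqref{6.1}. From $m = c$, $i = s_0$: since $s_0$ is even (as $q-1$ and $ck'$ are), $(-1)^{s_0} = 1$, yielding $\binom{s}{s_0} = \binom{2(q-1)-2ck'}{q-1-ck'}$, the first term of \eqref{6.1}. From $m = c/2 - 1$, $i = n - (c/2-1)k'$: since $k'$ is odd, $(-1)^{n-(c/2-1)k'}$ simplifies to $(-1)^{n+c/2+1}$, and $\binom{2c}{c-2} = \binom{2c}{c+2}$ gives the second term of \eqref{6.1}. The principal obstacle is the careful casework matching the two-valued solutions of $2ki \equiv -2m \pmod{q-1}$ against $i \le s$ across the full range of $m$, and in particular ensuring that the $m = c/2$ term---the only other one that would otherwise survive---is killed precisely by the $A_k$ hypothesis via $\binom{2c}{c} \equiv 0$.
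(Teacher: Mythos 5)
Your proposal is correct and follows essentially the same route as the paper's own proof: it tests \eqref{2.5} at $s=(2cb)^*=2(q-1)-2ck'$, computes $(ks)^*=q-1-2c$, enumerates the indices $i$ with $(2ki)^*\ge q-1-2c$ via the two solutions of $2ki\equiv-2m\pmod{q-1}$ (the paper's three candidate forms $\tfrac32(q-1)-lk'$, $q-1-lk'$, $\tfrac12(q-1)-lk'$ are just a different bookkeeping of the same residues), and isolates the same three surviving terms, killing the $m=c/2$ term with $\binom{2c}{c}\equiv 0$ from \eqref{3.7}. The only cosmetic difference is that you treat the $(m,i)=(0,n)$ pair separately, which in the paper is simply the $l=0$ instance of Case~3.
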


\begin{proof}
By Lemma~\ref{L3.5}, $c$ is even. Let $s=(2cb)^*$. Since $2cb\not\equiv 0\pmod{q-1}$, we have $1\le s\le q-2$. Clearly, $2ck'>q-1$. (Otherwise, $2c\le (q-1)/k'$, which implies that $2c\le c$, a contradiction.) It follows that
\[
s=2(q-1)-2ck'.
\]
Note that $c<(q-1)/2$. (Otherwise, since $\text{gcd}(k',q-1)=1$, we have $k'<(q-1)/2\le 2$, which implies that $k'=1$, i.e., $k=1$, which is a contradiction.) Thus
\[
(ks)^*=q-1-2c.
\]
By \eqref{2.5}, 
\begin{equation}\label{6.2}
\sum_i(-1)^i\binom si\binom{(2ki)^*}{q-1-2c}=(-2)^s.
\end{equation}
For each $0\le l\le c$, let $i\in\{0,\dots,q-1\}$ be such that $(2ki)^*=q-1-2l$. Then 
\[
i=\frac32(q-1)-lk'\quad\text{or}\quad q-1-lk'\quad\text{or}\quad \frac 12(q-1)-lk'.
\]
In each of these cases, we determine the necessary conditions on $l$ such that $i$ satisfies $0\le i\le s$.

\medskip
{\bf Case 1.} Assume that $i=\frac32(q-1)-lk'$. In this case, 
\[
\begin{split}
i\,&\ge\frac 32(q-1)-ck'>2(q-1)-2ck'\qquad\text{(since $2ck'>q-1$)}\cr
&=s.
\end{split}
\]

{\bf Case 2.} Assume that $i=q-1-lk'$. In this case we always have $i\ge 0$. Moreover, 
\[
\begin{split}
i\le s&\Leftrightarrow  q-1-lk'\le 2(q-1)-2ck'\cr
&\Leftrightarrow l\ge 2c-\frac{q-1}{k'}\cr
&\Leftrightarrow  l\ge c.
\end{split}
\]

{\bf Case 3.} Assume that $i=\frac 12(q-1)-lk'$. In this case, $i\ge 0$ if and only if $l\le c/2$. Moreover,
\[
\begin{split}
i\le s&\Leftrightarrow  \frac 12(q-1)-lk'\le 2(q-1)-2ck'\cr
&\Leftrightarrow l\ge 2c-\frac 32\cdot\frac{q-1}{k'}\cr
&\Rightarrow  l>2c-\frac 32(c+1)\cr
&\Rightarrow  l\ge\frac c2-1.
\end{split}
\]
Combining the above three cases, we see that \eqref{6.2} becomes
\begin{equation}\label{6.3}
\begin{split}
2^{-2ck'}
=\,&\binom{2(q-1)-2ck'}{q-1-ck'}\cr
&+(-1)^{\frac{q-1}2+\frac c2+1}\binom{2(q-1)-2ck'}{\frac 12(q-1)-(\frac c2-1)k'}\binom{q-1-2(\frac c2-1)}{q-1-2c}\cr
&+(-1)^{\frac{q-1}2+\frac c2}\binom{2(q-1)-2ck'}{\frac 12(q-1)-\frac c2k'}\binom{q-1-c}{q-1-2c}.
\end{split}
\end{equation}
In the above,
\[
\binom{q-1-2(\frac c2-1)}{q-1-2c}=\binom{-c+1}{2+c}=\binom{2c}{c+2},
\]
and, by \eqref{3.7},
\[
\binom{q-1-c}{q-1-2c}=\binom{-1-c}c=\binom{2c}c=0.
\]
Hence \eqref{6.1} follows from \eqref{6.3}.
\end{proof}

\begin{thm}\label{T6.2}
Conjecture~\ref{C1.1} is true.
\end{thm}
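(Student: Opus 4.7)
My plan is to argue by contradiction, starting from the reduction of Conjecture~\ref{C1.1} proved in \cite{DLW-FFA-2007}: if the conjecture fails for $q=p^e$ (with $p$ odd), then there is an integer $k\in\{1,\dots,q-1\}$ which is not a power of $p$ and for which both $A_k$ and $B_k$ are PPs of $\f_q$. Two structural facts about this $k$ follow at once. First, since $A_k$ is a PP, Theorem~\ref{T2.3}(i) gives $\gcd(k,q-1)=1$, so the inverse $k'$ of $k$ modulo $q-1$ exists, and Lemma~\ref{L3.3.1} forces every base-$p$ digit of $k'$ to be $0$ or $1$; moreover $k'$ is not a power of $p$, for otherwise $k$ would be. Second, since $B_k$ is a PP, Lemma~\ref{L3.5} ensures that $c:=\lfloor(q-1)/k'\rfloor$ is even.

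The middle step is to replay the proof of Theorem~\ref{T5.2} verbatim with $(k',c)$ in place of $(k,a)$. The four inputs that proof actually used about $(k,a)$ are that $k$ is not a power of $p$, that $\gcd(k,q-1)=1$, that every base-$p$ digit of $k$ is at most $1$, and that $a$ is even; all four hold here for $(k',c)$. Applying Fact~\ref{F3.1} as a cyclic shift of base-$p$ digits, I may assume the two rightmost $1$'s of $k'$ lie at positions $i=e-1-2l$ and $j=e-1-l$, where $l$ is the shortest circular distance between two digits of $k'$ equal to $1$; the non-evenly-spaced property follows from $\gcd(k',q-1)=1$ exactly as before. The two cases $l=1$ and $l\ge 2$ then yield the same bounds on $k'_0p^0+\cdots+k'_ip^i$ used to verify hypothesis~\eqref{5.1} of Lemma~\ref{L5.1}, and the same evaluation of \eqref{5.2} produces
\[
c\equiv 0\pmod p.
\]

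Finally, I will feed $c\equiv 0\pmod p$ into Lemma~\ref{L6.1}. The last base-$p$ digit of $c$ is $0$, so the last digit of $c+2$ is $2$ while the last digit of $2c$ is $0$; Lucas' theorem gives $\binom{2c}{c+2}\equiv 0\pmod p$, so the second summand on the right of~\eqref{6.1} vanishes. Setting $N=q-1-ck'$, what remains is $2^{-2ck'}\equiv \binom{2N}{N}\pmod p$. Since every base-$p$ digit of $q-1$ equals $p-1$ and $ck'$ has last digit $0$, $N$ has last digit $p-1$, and hence $2N$ has last digit $p-2$; Lucas then forces $\binom{2N}{N}\equiv\binom{p-2}{p-1}\cdot(\cdots)=0\pmod p$, while $2^{-2ck'}$ is a nonzero element of $\f_p$. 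This contradiction proves the theorem. The main obstacle I anticipate is the bookkeeping in the middle paragraph: verifying line-for-line that the inequality chains in Theorem~\ref{T5.2}'s two cases and the hypothesis of Lemma~\ref{L5.1} really do transfer cleanly to $(k',c)$. By contrast, the Lucas-theorem endgame above is very short.
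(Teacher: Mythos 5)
Your proposal is correct and follows essentially the same route as the paper's own proof: reduce to $k$ with both $A_k,B_k$ PPs, use Lemma~\ref{L3.3.1} and the Theorem~\ref{T5.2} machinery applied to $(k',c)$ to force $c\equiv 0\pmod p$, and then kill both binomial coefficients on the right of \eqref{6.1} via Lucas' theorem to contradict $2^{-2ck'}\neq 0$. The paper's endgame phrases the vanishing of $\binom{2(q-1)-2ck'}{q-1-ck'}$ as a carry at $p^0$ in $(q-1-ck')+(q-1-ck')$, which is the same computation as your last-digit argument.
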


\begin{proof} Assume to the contrary that Conjecture~\ref{C1.1} is false. Then there exists $1\le k\le q-1$, which is not a power of $p$, such that both $A_k$ and $B_k$ are PPs of $\f_q$.

By Lemma~\ref{L3.3.1}, all the base $p$ digits of $k'$ are $\le 1$. By exactly the same argument as in the proof of Theorem~\ref{T5.2}, with $k$ and $a$ replaced by $k'$ and $c$, respectively, we conclude that we may assume that $c\equiv 0\pmod p$. Then obviously, 
\begin{equation}\label{6.4}
\binom{2c}{c+2}=0.
\end{equation}
Since $q-1-ck'\equiv p-1\pmod p$, the sum $(q-1-ck')+(q-1-ck')$ has a carry in base $p$ at $p^0$, implying that 
\begin{equation}\label{6.5}
\binom{2(q-1)-2ck'}{q-1-ck'}=0.
\end{equation}
Combining \eqref{6.1}, \eqref{6.4} and \eqref{6.5}, we have a contradiction.
\end{proof}

As a concluding remark, we reiterate that Conjectures~A and B are still open and we hope that they will stimulate further research.

\section*{Acknowledgments}
The authors are thankful to Qing Xiang who facilitated their collaboration.

%%%%%%%%%%%%%%%%%%%%%%%%%%%%%%%%%%%

\end{document}